\newtheorem{thm}{Theorem}[section]
\newtheorem{cor}[thm]{Corollary}
\newtheorem{lem}[thm]{Lemma}
\theoremstyle{definition}
\theoremstyle{remark}
\newtheorem{rem}{Remark}[section]
\numberwithin{equation}{section}
\numberwithin{equation}{section}
\newcommand{\red}[1]{\textcolor{black}{#1}}
\newcommand{\blue}[1]{\textcolor{black}{#1}}
\begin{document}
%The current file is: \currfilename. 
%GB: DRAFT \mmddyyyydate\today\quad \currenttime

\title[Gaussian beams]
      {General superpositions of Gaussian beams and  propagation errors}

\author{Hailiang Liu$^\dagger$,  James Ralston$^\S$, and Peimeng Yin$^\dagger$ }
\address{$^\dagger$Iowa State University, Mathematics Department, Ames, IA 50011} \email{hliu@iastate.edu;\ pemyin@iastate.edu}
\address{$^\S$ UCLA, Mathematics Department, Los Angeles, CA 90095} \email{ralston@math.ucla.edu}
%\address{Iowa State University, Mathematics Department, Ames, IA 50011} \email{hliu@iastate.edu}

%\date{May 1, 2018; Revision July 25, 2018 } % Activate to display a given date or no date
%\date{August 25, 2018 }
%\thanks{Liu was supported in part by the National Science Foundation under Grant DMS1312636 and by NSF Grant RNMS (Ki-Net) 1107291.}

\subjclass[2000]{Primary  35L05, 35A35, 41A60}
\keywords{High frequency wave propagation, Gaussian beams, phase space, superposition, error estimates}

\begin{abstract}  Gaussian beams are asymptotically valid high frequency solutions
concentrated on a single curve through the physical domain, and superposition of Gaussian beams provides a powerful tool to generate more general high frequency solutions to PDEs.  We present a superposition of Gaussian beams over an arbitrary bounded set of dimension $m$ in phase space, and show that the tools recently developed in [ H. Liu, O. Runborg, and N. M. Tanushev, Math. Comp., 82: 919--952, 2013] can be applied to obtain the propagation error of order $k^{1- \frac{N}{2}- \frac{d-m}{4}}$, where $N$ is the order of beams and $d$ is the spatial dimension.  Moreover, we study the sharpness of this estimate in examples.
%present various examples to illustrate that the order of initial   can vary from case to case.
\end{abstract}

\maketitle
%\tableofcontents

\section{Introduction}
In this paper we investigate issues related to the  accuracy of Gaussian beam approximations to high frequency wave propagation. This is related to recent results on Gaussian beam methods in %\cite{LP15}- \cite{Zh14}.
\cite{LP15, LP17,LR09, LR10, LRT13, LRRT14,LRT16, Zh14}.  
Our model equation is the acoustic wave equation
\begin{align}\label{pp}
Pu=\partial_t^2u(x, t)  -c(x)^2\Delta u(x, t)  =0, \quad (x, t)\in \mathbb{R}_x^d \times\mathbb{R}_t
\end{align}
where $c(x)$ is a positive smooth function.  The initial data are given by
\begin{align}\label{id}
(u(x, 0), \partial_t u(x, 0))=(B_0(x), kB_1(x))e^{ikS_0(x)},
\end{align}
where $k \gg 1$ and $\nabla S_0\neq 0$, so that the data are highly oscillatory. Propagation of high frequency oscillations leads to  mathematical and numerical challenges in solving  wave propagation problems.

We study the errors which arise when one approximates solutions to the initial value problem (\ref{pp}) by superpositions of Gaussian beams. Our starting point is \cite{LRT13}, and we refer the reader to it for  more references to earlier results on superpositions of beams.   In addition, some recent effort has also been made to extend the Gaussian beam method to more complex settings such as  symmetric hyperbolic systems with polarized waves \cite{JJ15}, the Schr\"{o}dinger equation with discontinuous potentials \cite{JW14}, and wave equations in bounded convex domains \cite{BAA09, BLQ14}. 

To compare the results in \cite{LRT13} with what we do here we need to recall some conventions. For a Gaussian beam
$$v(x,t)=(a_0(x,t)+k^{-1}a_1(x,t)+\cdots+k^{-p}a_p(x,t))e^{ik\phi(x,t)}$$
we say that $v$ is an $N$th-order approximation to a solution of $Pu=0$ when the sequence of equations (from geometric optics) $L_j(x,t)=0$, $j=0,1,\dots$, holds to order $N+2-2j$ on the central ray path, where
$$[Pv](x,t) =e^{ik\phi(x,t)}\sum_{j=0}^{p+1}k^{2-j}L_j(x,t).$$
Analogously to \cite{LRT13} we use superpositions of the form
\begin{align}\label{suu}
u_{GB}(x,t)=k^{\frac{m}{2}}\int_{K_0} v(x,t;X_0)dX_0,
\end{align}
where \red{$K_0$ is a submanifold  of dimension $m$ in phase space that does not intersect $\{(x, p): \; p=0\}$}, 
 and the central ray for $v(x,t;X_0)$ has initial data $(x(0),p(0))=X_0$.  \red{In this paper we  are considering superpositions over submanifolds of $2d$-phase space of  dimension at most $d$.}  
%\red{Note that we need to assume $p(0)\not=0$ for the acoustic wave equation (\ref{pp}) so that the central ray is well-defined.}  
Finally we use the unscaled energy norm
$$||u||^2_E= {1\over 2}\int_{\mathbb R^n}c^{-2}(x)|\partial_t u|^2+|\nabla_x u|^2dx$$
in place of the scaled energy norm in \cite{LRT13} which has an additional factor of $k^{-1}$. With these conventions the principal result of \cite{LRT13} becomes
%\vskip.1in
%\noindent{\bf Theorem 1}(\cite{LRT13}): 
\begin{thm}(\cite{LRT13})\label{1.1}
When $u(x,t)$ is the exact solution to $Pu=0$ with the initial data of the superposition $u_{GB}$ of Gaussian  beams of order $N$ over a compact subset $K_0$ of dimension $d$ in $\mathbb R^d$, the error estimate \begin{align}\label{er}
||u(\cdot,t)-u_{GB}(\cdot,t)||_E\leq C(T)k^{1-N/2}
\end{align}
holds for $t\in [0,T]$.
\end{thm}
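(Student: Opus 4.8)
The plan is to reduce the estimate to a standard energy inequality for the inhomogeneous wave equation and then to concentrate all the work on bounding the residual $Pu_{GB}$ in $L^2$. Set $w=u-u_{GB}$. Since $u$ solves $Pu=0$ with exactly the initial data of $u_{GB}$, the error satisfies $Pw=-Pu_{GB}$ with $w(\cdot,0)=\partial_t w(\cdot,0)=0$. Multiplying $Pw=f$ by $c^{-2}\partial_t\bar w$, integrating over $\mathbb R^d$, and taking real parts gives $\tfrac{d}{dt}\|w\|_E\le C\|f(\cdot,t)\|_{L^2}$, where $c$ bounded above and below absorbs the weight. Integrating in $t$ yields
\begin{equation}
\|w(\cdot,t)\|_E\le C\int_0^T\|Pu_{GB}(\cdot,s)\|_{L^2}\,ds,\qquad t\in[0,T].
\end{equation}
Thus it suffices to show $\|Pu_{GB}(\cdot,t)\|_{L^2}\le C(T)k^{1-N/2}$ uniformly for $t\in[0,T]$.

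First I would estimate the residual of a single beam. Using $[Pv](x,t;X_0)=e^{ik\phi}\sum_{j=0}^{p+1}k^{2-j}L_j$ together with the defining property that $L_j$ vanishes to order $N+2-2j$ on the central ray, a Taylor expansion about the ray point $x(t;X_0)$ gives $|L_j|\le C|x-x(t;X_0)|^{N+2-2j}$ in a fixed tube around the ray. Combined with the Gaussian concentration $e^{-k\,\mathrm{Im}\,\phi}\le e^{-ck|x-x(t;X_0)|^2}$, which holds because the complex Hessian of $\phi$ stays in the Siegel upper half space so that $\mathrm{Im}\,\phi$ is uniformly positive definite transverse to the ray, the Gaussian moment integrals produce
\begin{equation}
\big\|k^{2-j}L_j\,e^{ik\phi}\big\|_{L^2_x}\le C\,k^{2-j}\,k^{-(N+2-2j)/2}\,k^{-d/4}=C\,k^{1-N/2-d/4},
\end{equation}
uniformly in $j$ and $X_0$; the contribution from the complement of the tube is exponentially small in $k$ and negligible.

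The decisive step is the superposition estimate, which is where I would invoke the tools of \cite{LRT13}. Writing
\begin{equation}
\|Pu_{GB}(\cdot,t)\|_{L^2}^2=k^{m}\int_{K_0}\int_{K_0}\big\langle Pv(\cdot,t;X_0),\,Pv(\cdot,t;Y_0)\big\rangle\,dX_0\,dY_0,
\end{equation}
the key lemma shows that the cross term decays rapidly in $|X_0-Y_0|$, being effectively concentrated on the set $|X_0-Y_0|\lesssim k^{-1/2}$ of measure $\sim k^{-m/2}$ while retaining the size of the single-beam $L^2$ norm squared. Carrying out the $Y_0$-integration then gives a gain of $k^{-m/2}$ over the naive bound, so that with $m=d$,
\begin{equation}
\|Pu_{GB}(\cdot,t)\|_{L^2}\le C\,k^{m/2}\,k^{-m/4}\,k^{1-N/2-d/4}=C\,k^{1-N/2},
\end{equation}
which closes the estimate. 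I expect this almost-orthogonality of beams to be the main obstacle: it requires analyzing the combined phase $\phi(x,t;X_0)-\overline{\phi(x,t;Y_0)}$ and proving that its imaginary part furnishes simultaneous Gaussian decay in $x-x(t;X_0)$ and in $X_0-Y_0$. This rests on the non-degeneracy of the beam flow (the Riccati evolution keeping the Hessian in the Siegel half space) and on the compactness of $K_0$ and of $[0,T]$ to secure uniform constants; the technical heart is to perform the $x$-integration by completing the square in the complex Gaussian and then to control the resulting damped oscillatory integral over $(X_0,Y_0)$.
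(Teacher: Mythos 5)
Your outline matches the paper's strategy in its broad structure: the well-posedness/energy inequality reduces everything to bounding $\|Pu_{GB}\|_{L^2}$, that norm squared is expanded as a double integral over $K_0\times K_0$ of beam cross terms, and your scaling arithmetic $k^{m}\cdot k^{-m/2}\cdot\bigl(k^{1-N/2-d/4}\bigr)^2$ with $m=d$ gives exactly the right exponent. However, the mechanism you propose for the decisive almost-orthogonality step has a genuine gap. You assert that the imaginary part of the combined phase $\psi=\phi(x,t;X_0)-\overline{\phi(x,t;Y_0)}$ furnishes simultaneous Gaussian decay in $x-x(t;X_0)$ and in $X_0-Y_0$. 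That is false at caustics. What $\mathrm{Im}\,\psi$ controls is $|x-\gamma|^2+|x-\gamma'|^2$ with $\gamma=x(t;X_0)$, $\gamma'=x(t;Y_0)$, hence at best $e^{-k\,\mathrm{Im}\,\psi}\lesssim e^{-ck|\gamma-\gamma'|^2}$; but the map $X_0\mapsto x(t;X_0)$ is precisely the one that degenerates at caustics. Two beams launched from phase-space points with $|X_0-Y_0|\sim 1$ can satisfy $\gamma=\gamma'$ at a focus (see the paper's Section 5 example, where all rays from a sphere cross at the origin). On that set $\mathrm{Im}\,\psi$ gives no decay whatsoever in $|X_0-Y_0|$, so your plan of completing the square and integrating out the Gaussian in $x$ leaves cross terms of full single-beam size over a region of $(X_0,Y_0)$ of measure $O(1)$, destroying the $k^{-m/2}$ gain exactly in the focusing situations Gaussian beams are designed to handle.

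The paper (following \cite{LRT13}) closes this hole with a two-region decomposition via a partition of unity $\chi_1+\chi_2$ in $(x,X_0,X_0')$. On the support of $\chi_1$, where $|\gamma-\gamma'|>\theta|X_0-Y_0|$, your imaginary-part argument is valid and yields the factor $e^{-ck\theta^2|X_0-Y_0|^2}$. On the support of $\chi_2$ (the caustic region, $|\gamma-\gamma'|<\theta|X_0-Y_0|$) the decay comes instead from oscillation: by the non-squeezing lemma the phase-space distance $|X_0-Y_0|$ is comparable along the Hamiltonian flow, so when the positions nearly coincide the momenta must separate, giving the gradient lower bound $|\nabla_x\psi|\geq C|X_0-Y_0|$. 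A non-stationary phase lemma (repeated integration by parts in $x$, retaining the Gaussian weights) then bounds this piece by $k^{1-d/2}\bigl(1+\sqrt{k}\,|X_0-Y_0|\bigr)^{-K}$, and integration over $K_0\times K_0$ recovers the same $k^{-m/2}$ factor as in the first region. Without this second mechanism — decay produced by momentum mismatch rather than by $\mathrm{Im}\,\psi$ — your proposal cannot be completed; with it, your heuristic that the cross terms concentrate on $|X_0-Y_0|\lesssim k^{-1/2}$ becomes exactly the claim the paper proves.
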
 
%\vskip.1in
%\noindent 
In this note we extend that to
%\vskip.1in \noindent {\bf Theorem 2}: 
\begin{thm}\label{1.2}
With the hypotheses in Theorem \ref{1.1}
\begin{align}\label{erm}
||u(\cdot,t)-u_{GB}(\cdot,t)||_E\leq C(T)k^{1-N/2-(d-m)/4}
\end{align}
when $K_0$ is a  bounded domain in phase space of dimension $m$.
\end{thm}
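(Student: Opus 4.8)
The plan is to reduce the energy-norm error to an $L^2$ bound on $Pu_{GB}$ and then to estimate that $L^2$ norm by the almost-orthogonality machinery of \cite{LRT13}, the only genuinely new ingredient being a dimension count over the $m$-dimensional set $K_0$. Since $u$ and $u_{GB}$ carry the same Cauchy data, $w:=u-u_{GB}$ solves $Pw=-Pu_{GB}$ with $w(\cdot,0)=\partial_t w(\cdot,0)=0$, so $\|w(\cdot,0)\|_E=0$. Differentiating the energy and integrating by parts (the standard energy inequality for \eqref{pp}) gives
\begin{equation}\label{eq:energy}
\|w(\cdot,t)\|_E\le C\int_0^t\|Pu_{GB}(\cdot,s)\|_{L^2}\,ds,
\end{equation}
so that \eqref{erm} follows once I show $\|Pu_{GB}(\cdot,s)\|_{L^2}\le C(T)\,k^{1-N/2-(d-m)/4}$ for $s\in[0,T]$.

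Next I would insert the beam expansion. Writing $Pv=e^{ik\phi}\sum_{j=0}^{p+1}k^{2-j}L_j$ and applying Minkowski's inequality,
\begin{equation}\label{eq:mink}
\|Pu_{GB}(\cdot,s)\|_{L^2}\le k^{m/2}\sum_{j=0}^{p+1}k^{2-j}\Big\|\int_{K_0}e^{ik\phi(\cdot,s;X_0)}L_j(\cdot,s;X_0)\,dX_0\Big\|_{L^2_x}.
\end{equation}
Because $v$ is an $N$th-order beam, $L_j$ vanishes to order $\ell_j:=N+2-2j$ on the central ray. A power count then shows that every term carries the same power of $k$, so it suffices to prove the single estimate
\begin{equation}\label{eq:key}
J_j:=\Big\|\int_{K_0}e^{ik\phi}L_j\,dX_0\Big\|_{L^2_x}^2\le C\,k^{-\ell_j-(d+m)/2};
\end{equation}
feeding \eqref{eq:key} back into \eqref{eq:mink} yields $k^{m/2}k^{2-j}\sqrt{J_j}=k^{1-N/2-(d-m)/4}$ independently of $j$, and summing the finitely many terms gives the claim.

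The heart of the argument is \eqref{eq:key}, for which I would use the tools of \cite{LRT13}. Expanding the square and integrating in $x$ first, the inner $x$-integral is, to leading order, a Gaussian integral with phase $\phi(x;X_0)-\overline{\phi(x;Y_0)}$; completing the square in $x$ produces a kernel $G(X_0,Y_0)$ obeying the almost-orthogonality bound
\begin{equation}\label{eq:ortho}
|G(X_0,Y_0)|\le C\,k^{-d/2-\ell_j}\,e^{-ck\,|\Phi_s(X_0)-\Phi_s(Y_0)|^2},
\end{equation}
where $\Phi_s$ denotes the bicharacteristic flow, so that the exponent measures the \emph{full phase-space} separation $(\xi,\eta)=\bigl(x(s;X_0)-x(s;Y_0),\,p(s;X_0)-p(s;Y_0)\bigr)$. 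The factor $k^{-d/2}$ comes from the $d$-dimensional Gaussian $x$-integral, and $k^{-\ell_j}$ from the vanishing of the two amplitudes to order $\ell_j$ at their respective centers. The new step relative to the case $m=d$ of Theorem~\ref{1.1} is the integration of $G$ over $K_0\times K_0$: since $\Phi_s$ is a diffeomorphism of phase space, its restriction to the compact $K_0$ is, uniformly for $s\in[0,T]$, a bi-Lipschitz immersion, whence $|\Phi_s(X_0)-\Phi_s(Y_0)|\ge c|X_0-Y_0|$ on $K_0$. The near-diagonal integral then contributes $\int_{K_0}e^{-c'k|X_0-Y_0|^2}\,dY_0\sim k^{-m/2}$, and integrating the remaining bounded variable $X_0$ gives $J_j\le C\,k^{-d/2-\ell_j}\cdot k^{-m/2}$, which is exactly \eqref{eq:key}.

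The main obstacle is the rigorous justification of \eqref{eq:ortho}: one must track the two amplitude factors, each vanishing to order $\ell_j$, through the stationary-phase evaluation of the $x$-integral to produce precisely the power $k^{-\ell_j}$, and one must verify that the completed-square quadratic form is positive definite in the entire $2d$-dimensional separation $(\xi,\eta)$, uniformly on $[0,T]$. It is precisely the retention of the momentum separation $\eta$ — which enters the $x$-integral as the oscillation $e^{ik\eta\cdot r}$ and, after completing the square, as Gaussian decay $e^{-ck|\eta|^2}$ — that makes \eqref{eq:ortho} control all phase-space directions. This is what keeps the immersion argument free of caustics (the position map $X_0\mapsto x(s;X_0)$ alone may degenerate, but the symplectic phase-space map $\Phi_s$ never does) and makes the dimension count yielding the $k^{-m/2}$ gain exact.
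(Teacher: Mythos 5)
Your overall architecture matches the paper's proof of Theorem \ref{ee}: reduce the energy error to $\|Pu_{GB}\|_{L^2}$ via the well-posedness estimate (\ref{eng}) (the initial error vanishes since $u$ and $u_{GB}$ share Cauchy data), expand $Pv$ into finitely many terms $k^j g\,(x-\gamma)^\beta e^{ik\phi}$, and gain the extra power of $k$ by integrating a kernel over the $m$-dimensional set $K_0\times K_0$; your power count is correct and reproduces (\ref{result}). The genuine difference is the core kernel estimate, and that is where there is a gap. Your almost-orthogonality bound asserts \emph{Gaussian} decay $e^{-ck|\Phi_s(X_0)-\Phi_s(Y_0)|^2}$ of the $x$-integrated kernel in the \emph{full} phase-space separation, including the momentum separation $\Delta p$. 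The spatial half of this is fine, since $\Im\psi\ge\delta\left(|x-\gamma|^2+|x-\gamma'|^2\right)\ge\tfrac{\delta}{2}|\gamma-\gamma'|^2$ pointwise (first part of Lemma \ref{PhaseEst}). But the momentum half cannot hold as stated: decay in $\Delta p$ must come from oscillation, and the amplitude factors $g(x,t;X_0)$ are merely smooth --- they contain the sound speed $c(x)$, Taylor remainders of the eikonal and transport equations, and (for $N\ge2$) the cutoffs $\rho_\eta$ --- so the ``completing the square'' / contour-shift step is unavailable. Integration by parts in $x$ then yields only rapid polynomial decay $C_K\left(1+\sqrt{k}\,|X_0-X_0'|\right)^{-K}$ for every $K$, never $e^{-ck|\Delta p|^2}$: a compactly supported smooth, non-analytic amplitude integrated against $e^{ik\Delta p\cdot x-k|x|^2}$ decays like $\left(\sqrt{k}\,|\Delta p|\right)^{-K}$ for all $K$ and in general no faster, once $|\Delta p|$ exceeds the Gaussian width. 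Near caustics, where positions coincide but momenta separate, this is precisely the regime that matters, so the lemma you flag as ``the main obstacle'' is not just unproven --- it is false in the stated strength.

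The overstatement is repairable, and the repair is essentially the paper's proof: split the kernel with the partition of unity (\ref{chidef}) according to whether $|\gamma-\gamma'|\gtrsim|X_0-X_0'|$ or not. On the first (non-caustic) region the pointwise Gaussian decay in position, which you do have, gives the term ${\mathcal I}_1$ with the factor $e^{-\frac{\delta k}{8}\theta^2|X_0-X_0'|^2}$; on the second (caustic) region the second part of Lemma \ref{PhaseEst} gives $|\nabla_x\psi|\gtrsim|X_0-X_0'|$, and the non-stationary phase Lemma \ref{statphase} supplies the weaker but sufficient bound $\min\bigl(1,\sum_{|\alpha|\le K}(\sqrt{k}\,|X_0-X_0'|)^{-(2K-|\alpha|)}\bigr)$. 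Since
\begin{equation*}
\int_0^\Lambda \tau^{m-1}\left(1+\sqrt{k}\,\tau\right)^{-K}d\tau\ \lesssim\ k^{-m/2}\qquad\text{for } K>m,
\end{equation*}
this rapid polynomial decay produces exactly the same $k^{-m/2}$ gain as your Gaussian bound would, and the dimension count over $K_0\times K_0$ --- the only new ingredient beyond \cite{LRT13}, which you identified correctly, including the role of the non-squeezing Lemma \ref{nonsqueezing} in transferring separation along the flow back to $|X_0-X_0'|$ --- closes the proof of (\ref{erm}).
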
 
% \red{so that each central ray starting from $X_0 \in K_0$ is well defined}. 
%\begin{rem} 
\red{Comparing Theorems \ref{1.1} and  \ref{1.2} one sees that Theorem  \ref{1.1}  is the special case where $K_0$ is a domain in $\Bbb R^d$ and hence $m=d$. In Theorem  \ref{1.2} the initial data is not restricted to the \lq\lq WKB" form in (\ref{id}).  In this paper we will  always use superpositions of the form (\ref{suu}) with beams that have leading amplitudes independent of $k$. Later in this paper we sometimes fix the dependence of the error on $k$ by dividing by the energy norm of the initial data. The  decrease in the error becomes faster as $m$ decreases. This might be counter-intuitive, but it is consistent with the results in \S 5 of [9] where for a single first order beam ($N=1$ and $m=0$) in 2 dimensions $$|| u(\cdot,t)-u_{GB}(\cdot,t)||_E\leq C(T)k^{0}$$ in the unscaled energy norm above.  }
%\red{One can verify that the energy norm $\|u(\cdot,0)\|_E$ of the data in (\ref{id}) is order $k^{1-(d-m)/4}$ (we refer to example 3 in Section 7 for the case 
%$m=d$),  so that the relative error 
%$$\|u(\cdot,t)-u_{GB}(\cdot,t)\|_E/\|u(\cdot,0)\|_E\leq C(T)k^{-N/2}.
%$$
%}
%\end{rem}
%Note that the energy norm $\|u(\cdot,0)\|_E$ of the data in (\ref{id}) is order $k$, so that the relative error $\|u(\cdot,t)-u_{GB}(\cdot,t)\|_E/\|u(\cdot,0)\|_E%\leq C(T)k^{-N/2-(d-m)/4}$.
%\vskip.1in

Theorem \ref{1.2} is sharp in some cases. In Section 4 we give an example with $d=3$, $m =2$ and $N=1$, where the error as a function of $k$ decays no faster than the rate in (\ref{erm}). However, the initial data in this example is not of the form (\ref{id}). A question that was left open in \cite{LRT13} is whether (\ref{er}) is sharp for data of that form. 
%Numerical evidence in [8]… [leave the rest of the paragraph as it stands, omitting the last two sentences.]"
%A question that was left open in \cite{LRT13} is whether (\ref{er}) is sharp. 
Numerical evidence in  \cite{LRT13} suggests that it is sharp when $N$ is even, but that when $N$ is odd the exponent on $k$ should be decreased by $1/2$, giving a faster decrease in the error as $k$ increases.
%In recent years error estimates for Gaussian beam superpositions have been established in various settings: For the time-dependent case error estimates were first derived for the initial data \cite{Kl86, Ta08}, and then for the solution of scalar hyperbolic equations and the Schr\"{o}dinger equation \cite{LR09, LR10, LRT13, LP15, Zh14}.  Error estimates have also been given in \cite{LRRT14, MR10} for the Helmholtz equation, and in  \cite{LP17} for symmetric hyperbolic systems.  The general result in those papers is that the error between the exact solution and the Gaussian beam approximation decays as $k^{-N/2}$  for $N-$th order beams in the appropriate Sobolev norm.
There are partial results on this conjecture. %Away from caustics,  
For superpositions of first order beams (N=1) for the semi-classical 
Schr\"odinger equation a proof of the faster decay of the error in $L^2$ is presented in \cite{Zh14}\footnote{Zheng's method can be applied to estimate errors in gaussian beam approximations for the acoustic wave equation in the $L^2$ and energy norms. This is consistent with the results in Table 2 and 4.}, based on ideas from \cite{Zh13}.

%\red{The faster decay of the error in $L^2$ norm has been established in \cite{Zh14} for superpositions of first order beams ($N=1$) for the semi-classical Schr\"{o}dinger equation.} In \cite{ZH13, Zh14} Zheng uses a Lagrangian manifold representation of the solution with Gaussian beam initial data. 
 For both the wave equation and the semi-classical Schr\"{o}dinger equation, in \cite{LRT16} the authors show that, away from caustics, the error has, uniformly, the faster decay rate in the maximum norm. However, close to caustics, their estimate degenerates. 

This paper is organized as follows: In Section 2 we derive a lower bound on the error for approximation by beam superpositions using energy conservation. In Section 3 we prove Theorem \ref{1.2}.  In Section 4 we construct the example mentioned above. In Section 5 we construct a superposition with $N=1$ for the acoustic wave equation with initial data of the form (\ref{id}) that develops a focus caustic at the origin. In a numerical study of this example we see the faster decay in the error conjectured in \cite{LRT13}. In Section 6 we construct an example in two space dimensions which develops a fold caustic on the unit circle. Here we again see numerically the faster decay conjectured in \cite{LRT13} in the energy norm, but in the maximum norm the decay is slower at some times. %In Section 7
%In section 5 we construct another example for the three dimensional acoustic wave equation, where cancellations do appear in the error.  In section 6, we construct an example for the two dimensional acoustic wave equation with fold caustics, where cancellations appear in the energy error, but not in the max-norm.  
 Section 7 is concerned with initial asymptotic rates shown by the construction of various examples. \red{These examples illustrate the initial data 
that can arise from superpositions of the form (\ref{suu}) and their respective energy norms}. 
 %\red{These examples are on the energy norms of initial data for superpostioins of the form (\ref{suu}), and it is not directly related to the earlier sections}.
 Some final remarks are given in section 8.
\vskip.1in
 \noindent Notation: Throughout this paper,  we use the notation $A\lesssim B$ to indicate that $A$ can be  bounded by $B$ multiplied by a constant independent of the frequency parameter $k$.  $A\sim B$ stands for $A\lesssim B$ and $B\lesssim A$.

\section{Energy conservation and lower error bound}

Error estimates for Gaussian beam superpositions are based on the well-posedness of the underlying equation.  For an equation of the form
\begin{equation}\label{wave}
Pu=0,
\end{equation}
we  recall  the  well-known results here (see, e.g., \cite{LRT13})
\begin{thm} Let $u$ be an exact solution of the wave equation (\ref{wave}), and $v$ be an approximate solution of the
same problem, then we have the generic well-posedness estimate
 \begin{equation}\label{eng}
 \|(u-v)(\cdot, t_2)\|_S \leq \|(u-v) (\cdot, t_1)\|_S  +
 Ck^q \int_{t_1}^{t_2} \|Pv(\cdot, \tau)\|_{L^2}d\tau.
 \end{equation}
 These apply to both
 \begin{itemize}
 \item
 the wave equation with $P=\partial_t^2-c^2(x)\Delta$, $q=0$,  and $\|\cdot\|_S $ is the energy norm
 $$
 \|u(\cdot, t)\|_E= \left(\frac{1}{2}\int_{\mathbb{R}^d} \left( c(x)^{-2}|\partial_t u(x, t)|^2+|\nabla_x u(x, t)|^2\right) dx\right)^{1/2},
 $$
 \item and the Schr\"{o}dinger equation with $q=1$, $\epsilon=\frac{1}{k}$,
 $$
 P=-i\epsilon \partial_t + \frac{\epsilon^2}{2}\Delta
 $$
 and $\|\cdot\|_S$ is the standard $L^2$ norm.
 \end{itemize}
\end{thm}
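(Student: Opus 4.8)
The plan is to treat both cases as an energy estimate for the difference $w := u - v$, driven by a forcing term, and then to integrate a differential inequality in time. Since $u$ solves $Pu=0$ exactly and $P$ is linear, we have $Pw = Pu - Pv = -Pv =: f$, so that $w$ satisfies the \emph{same} equation with source $f$, and $\|f\|_{L^2} = \|Pv\|_{L^2}$. The whole argument then reduces to bounding the growth rate of the appropriate norm $\|w(\cdot,t)\|_S$ in terms of $\|f(\cdot,\tau)\|_{L^2}$ and integrating from $t_1$ to $t_2$. I would assume enough decay and regularity of $u$ and $v$ at infinity to justify the integrations by parts below; this is automatic for the Gaussian-beam superpositions considered here, since the beams are Schwartz in $x$.

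For the wave equation ($P=\partial_t^2-c^2\Delta$, $q=0$), I would differentiate the energy $E(t)=\|w(\cdot,t)\|_E^2=\tfrac12\int_{\mathbb R^d}\bigl(c^{-2}|\partial_t w|^2+|\nabla_x w|^2\bigr)\,dx$. Taking real parts (the solutions are complex-valued) and integrating the gradient term by parts in $x$, the two spatial contributions combine so that
\[
\frac{d}{dt}E(t)=\operatorname{Re}\int_{\mathbb R^d} c^{-2}\,\overline{\partial_t w}\,\bigl(\partial_t^2 w-c^2\Delta w\bigr)\,dx=\operatorname{Re}\int_{\mathbb R^d} c^{-2}\,\overline{\partial_t w}\,f\,dx.
\]
Cauchy--Schwarz, the bound $\|c^{-1}\partial_t w\|_{L^2}\le\sqrt{2E(t)}$ built into the definition of $E$, and the uniform positivity and boundedness of $c$ (which give $\|c^{-1}f\|_{L^2}\lesssim\|f\|_{L^2}$) then yield $|E'(t)|\le C\sqrt{E(t)}\,\|Pv(\cdot,t)\|_{L^2}$, equivalently $\bigl|\tfrac{d}{dt}\|w(\cdot,t)\|_E\bigr|\le C\|Pv(\cdot,t)\|_{L^2}$; integrating over $[t_1,t_2]$ gives (\ref{eng}) with $q=0$.

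For the Schr\"odinger equation ($P=-i\epsilon\partial_t+\tfrac{\epsilon^2}{2}\Delta$, $\epsilon=1/k$, $q=1$, $L^2$ norm), I would instead solve for $\partial_t w=\tfrac{i}{\epsilon}f-\tfrac{i\epsilon}{2}\Delta w$ and differentiate $\|w(\cdot,t)\|_{L^2}^2=\int|w|^2\,dx$. Writing $\tfrac{d}{dt}\|w\|_{L^2}^2=2\operatorname{Re}\int\bar w\,\partial_t w\,dx$, the Laplacian term contributes (after one integration by parts) $\tfrac{i\epsilon}{2}\int|\nabla w|^2\,dx$, which is purely imaginary and so vanishes under $\operatorname{Re}$ — this skew-adjointness is exactly what enforces conservation for the homogeneous equation. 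What remains is $\tfrac{2}{\epsilon}\operatorname{Re}\bigl(i\int\bar w\,f\,dx\bigr)$, which is at most $\tfrac{2}{\epsilon}\|w\|_{L^2}\|f\|_{L^2}$. Hence $\bigl|\tfrac{d}{dt}\|w\|_{L^2}\bigr|\le\tfrac1\epsilon\|Pv\|_{L^2}=k\|Pv\|_{L^2}$, which integrates to (\ref{eng}) with $q=1$.

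The only genuinely delicate point — rather than any deep difficulty — is passing from the bound on $\tfrac{d}{dt}\|w\|^2$ to a bound on $\tfrac{d}{dt}\|w\|$, since $t\mapsto\|w(\cdot,t)\|$ need not be differentiable where it vanishes. I would handle this by the standard regularization: work with $\sqrt{E(t)+\delta}$ for $\delta>0$, use $\bigl|\tfrac{d}{dt}\sqrt{E+\delta}\bigr|=\tfrac{|E'|}{2\sqrt{E+\delta}}\le\tfrac{\sqrt2}{2}\,C\|Pv\|_{L^2}$ uniformly in $\delta$, integrate, and let $\delta\to0$ (and identically for $\|w\|_{L^2}^2$ in the Schr\"odinger case). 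With this in place both estimates follow from the fundamental theorem of calculus applied to the resulting Lipschitz bound, completing the proof.
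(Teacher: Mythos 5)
Your proposal is correct, and since the paper states this theorem without proof (recalling it as well known from \cite{LRT13}), your argument is essentially the standard one behind the cited result: differentiate the energy (resp.\ $L^2$) norm of $w=u-v$ with source $f=-Pv$, exploit skew-adjointness of $\frac{\epsilon^2}{2}\Delta$ in the Schr\"odinger case, apply Cauchy--Schwarz, and integrate the resulting differential inequality, with the factor $k^q$ arising exactly as you derive it ($q=0$ for the wave operator, $q=1$ from dividing by $\epsilon=1/k$). Your attention to the two genuine technicalities --- uniform bounds on $c$ and the regularization $\sqrt{E+\delta}$ to handle non-differentiability of the norm at its zeros --- means there is no gap.
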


  The lower bound on approximation errors is a consequence of the conservation law
\begin{equation}\label{en}
\|u(\cdot, t_2)\|_S=\|u(\cdot, t_1)\|_S, \forall t_1,t_2.
\end{equation}

\begin{thm}  Let $u_{GB}$ be a Gaussian beam superposition, and $u$ be an exact solution of  $Pu=0$.  %wave equation (\ref{wave}).
Assume that for some $\alpha>\beta >0$ there are times $t_1$ and $t_2$ and positive constants $C,\ c$ such that for $k\geq 1$
$$
Ck^{-\alpha}  \geq\|(u-u_{GB}) (\cdot, t_1)\|_S \; \text{and} \; \|(u-u_{GB}) (\cdot, t_2)\|_S \geq  c k^{-\beta},
$$
then there are  exact solutions $w_1$ and $w_2$  and a $c_0>0$ such that
$$
 \|(w_1-u_{GB}) (\cdot, t_1)\|_S=0 \hbox{ and } \|(w_1 -u_{GB}) (\cdot, t_2)\|_S\geq c_0 k^{-\beta}
$$
and
$$
 \|(w_2-u_{GB}) (\cdot, t_1)\|_S\geq c_0 k^{-\beta}  \hbox{ and } \|(w_2 -u_{GB}) (\cdot, t_2)\|_S=0
$$
for $k$ sufficiently large.
\end{thm}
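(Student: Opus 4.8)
The plan is to exploit the conservation law (\ref{en}): since the difference of two exact solutions is again an exact solution of $Pu=0$, its $S$-norm is constant in time. The strategy is to replace $u$ by exact solutions that agree \emph{exactly} (in the $S$-norm) with $u_{GB}$ at one of the two times, and then to transport the size of the remaining discrepancy to the other time by conservation, absorbing the small $k^{-\alpha}$ contribution into the larger $k^{-\beta}$ one via $\alpha>\beta$.

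First I would construct $w_1$. Because the $S$-norm sees its argument only through the data that control it (for the wave equation, $\partial_t$ and $\nabla_x$ in $L^2$, with an additive constant invisible to $\|\cdot\|_S$; for the Schr\"odinger case, the function itself in $L^2$), the requirement $\|(w_1-u_{GB})(\cdot,t_1)\|_S=0$ is met by choosing $w_1$ to be the exact solution whose Cauchy data at $t=t_1$ coincide with those of $u_{GB}$; well-posedness of $Pu=0$ produces such a $w_1$, the smoothness and finite energy of a beam superposition guaranteeing admissible data. Then $u-w_1$ is an exact solution, so by (\ref{en}) and the triangle inequality
\begin{align*}
\|(u-w_1)(\cdot,t_2)\|_S=\|(u-w_1)(\cdot,t_1)\|_S=\|(u-u_{GB})(\cdot,t_1)\|_S\leq Ck^{-\alpha}.
\end{align*}
A second triangle inequality at $t_2$ gives
\begin{align*}
\|(w_1-u_{GB})(\cdot,t_2)\|_S\geq \|(u-u_{GB})(\cdot,t_2)\|_S-\|(u-w_1)(\cdot,t_2)\|_S\geq ck^{-\beta}-Ck^{-\alpha}.
\end{align*}
Since $\alpha>\beta$, the term $Ck^{-\alpha}=Ck^{-(\alpha-\beta)}k^{-\beta}$ is at most $\tfrac{c}{2}k^{-\beta}$ once $k\geq(2C/c)^{1/(\alpha-\beta)}$, so the right-hand side is bounded below by $c_0k^{-\beta}$ with $c_0=c/2$, while $\|(w_1-u_{GB})(\cdot,t_1)\|_S=0$ holds by construction.

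The construction of $w_2$ is the mirror image: take $w_2$ to be the exact solution whose Cauchy data at $t=t_2$ match those of $u_{GB}$, so that $\|(w_2-u_{GB})(\cdot,t_2)\|_S=0$. Applying (\ref{en}) to the exact solution $u-w_2$ transports the lower bound $\|(u-w_2)(\cdot,t_2)\|_S=\|(u-u_{GB})(\cdot,t_2)\|_S\geq ck^{-\beta}$ back to $t_1$, and a triangle inequality against the small quantity $\|(u-u_{GB})(\cdot,t_1)\|_S\leq Ck^{-\alpha}$ yields $\|(w_2-u_{GB})(\cdot,t_1)\|_S\geq ck^{-\beta}-Ck^{-\alpha}\geq c_0k^{-\beta}$ for $k$ large, with the same $c_0$ and the same threshold on $k$.

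The argument is elementary once conservation is invoked, and the two cases are symmetric. The only point requiring care, and the one I expect to be the main (modest) obstacle, is justifying that the vanishing $S$-norm condition is realized by genuine exact solutions: one must check that the Cauchy data inherited from $u_{GB}$ at $t_1$ and $t_2$ lie in the class for which (\ref{en}) and the well-posedness of $Pu=0$ apply, so that all the norms appearing in the triangle inequalities are finite. For smooth, rapidly decaying beam superpositions this is immediate, but it should be stated explicitly.
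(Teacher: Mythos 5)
Your proposal is correct and is essentially identical to the paper's own proof: both define $w_1$ (resp.\ $w_2$) as the exact solution matching the Cauchy data of $u_{GB}$ at $t_1$ (resp.\ $t_2$), apply the conservation law (\ref{en}) to the exact solution $u-w_i$ to transport the bound to the other time, and finish with the triangle inequality together with $\alpha>\beta$ to absorb $Ck^{-\alpha}$ into $ck^{-\beta}$. Your explicit threshold $k\geq(2C/c)^{1/(\alpha-\beta)}$ with $c_0=c/2$, and your remark on admissibility of the beam superposition's Cauchy data, are minor refinements of exactly the same argument.
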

\begin{proof}
%Suppose that there exists $\sigma>N/2$ such that
%$$
 %\|(u-u_{GB}) (\cdot, t_2)\|_S \lesssim k^{-\sigma}.
%$$
Let $w_1(x, t)$ be the exact solution with data at $t=t_1$ that agree with the data of $u_{GB}(x, t)$ at $t=t_1$.
%$$
%w_1(x, t_1)=u_{GB}(x, t_1).
%$$
By (\ref{en}) we have
$$
\|(u-w_1)(\cdot, t_2)\|_S=\|(u-w_1)(x, t_1)\|_S= \|(u-u_{GB}) (\cdot, t_1)\|_S  \leq C k^{-\alpha}.
$$
It follows that
\begin{align*}
\|u_{GB}(\cdot, t_2) - w_1(\cdot, t_2)\|_S  & \geq  \|u_{GB}(\cdot, t_2) -u(\cdot, t_2)\|_S - \|u(\cdot, t_2)- w_1(\cdot, t_2)\|_S   \\
 & \geq  c k^{- \beta} -  C k^{- \alpha}.
\end{align*}
For the other case, we argue the following manner.  Let $w_2(x, t)$ be the exact solution with data at $t=t_2$ that agree with the data of $u_{GB}(x, t)$
at $t=t_2$.  By energy conservation we have
$$
\|(u-w_2)(\cdot, t_1)\|_S=\|(u-w_2)(x, t_2)\|_S= \|(u-u_{GB}) (\cdot, t_2)\|_S  \geq c k^{-\beta}.
$$
It follows that
\begin{align*}
\|u_{GB}(\cdot, t_1) - w_2(\cdot, t_1)\|_S  & \geq  \|u(\cdot, t_1)- w_2(\cdot, t_1)\|_S  - \|u_{GB}(\cdot, t_1) -u(\cdot, t_1)\|_S \\
 & \geq  c k^{- \beta} -  C k^{- \alpha}.
\end{align*}

\end{proof}
%Let $\|u(\cdot, t)\|_E$ denote the scaled energy norm
%$$
%\|u(\cdot, t)\|_E= \left(\frac{1}{2}\int_{\mathbb{R}^3} |\partial_t u|^2+|\nabla_x u|^2]dx\right)^{1/2}.
%$$
%Given a Gaussian beam superposition, $u_{GB}(x,t;k)$, with the following relation to an exact solution, $u(x, t; k)$,
 %to the wave equation in three space dimensions
 %$$
 %\|u_{GB}(\cdot, t_1; k) - u(\cdot, t_1; k)\|_E   \leq C k^{\sigma}, \quad\text{and}\quad   \|u_{GB}(\cdot, t_2; k) - u(\cdot, t_2; k)\|_E
 %  \geq  C k^{-1/2},
 %$$
%where $\sigma < ?1/2$, one can consider the exact solution with initial data at
% $t=t_2$,  so that
% $$
 %(w(x,t_2; k), w_t(x, t_2; k)) = (u_{GB}(x,t_2; k), u_{GB, t}(x,t_2; k).
 %$$
%By energy conservation,  $$\|u(\cdot,t; k)- w(\cdot, t; k)\|_E$$  is constant in $t$, it follows that
 %\begin{align*}
 %\|u_{GB}(\cdot, t_1; k) -w(\cdot, t_1; k)\|_E &  \|u(\cdot, t_1; k)- w(\cdot, t_1; k)\|_E - \|u_{GB}(\cdot, t_1; k) -u(\cdot, t_1; k)\|_E
 %& \geq  C k^{-1/2} ? Ck^{\sigma} \\
 %& \geq  \tilde C k^{-1/2}
%\end{align*}
%for $k$ sufficiently large.
 %To me, assuming that the example does give a superposition with the relation to an exact solution as described above, this makes everything consistent %with your results: caustics are an obstruction to propagation estimates better than O(k?1/2).
\begin{rem}This result may be used to identify the source of accuracy loss of the Gaussian beam superposition or other types of approximate solutions.
\end{rem}

\section{Propagation error of Gaussian beam superpositions}
Let $K_0$ be an arbitrary bounded set in phase space with dimension $m$. Given a point $X_0\in K_0$, we denote the $N$-th order Gaussian beam as $v(x, t;X_0)$, if we let $X_0$ range over $K_0$, we can form a superposition of Gaussian beams,
\begin{equation}\label{uk}
u_{GB}(x, t)=k^{m/2}\int_{K_0} v(x, t; X_0)dX_0,
\end{equation}
as an approximation to the exact solution for wave equation (\ref{wave}) with initial data $u_{GB}(x, 0)$.  

\red{
We recall that the general form of the N-th order Gaussian beam defined in \cite{LRT13} is 
$$
v(x, t; X_0)=\sum_{j=0}^{\lceil N/2\rceil -1} k^{-j}\rho_\eta(x-x(t; X_0))a_j(t, x-x(t; X_0))e^{ik \phi(t, x-x(t; X_0))},
$$
where $\rho_\eta(\cdot) \geq 0$ is a smooth cutoff function satisfying $\rho_\infty=1$ and 
$$
\rho_\eta(z)=\left\{ \begin{array}{ll}
1 &  |z| \leq \eta,\\
0 & |z| \geq 2\eta 
\end{array}
\right. \quad 0<\eta<\infty.
$$
In this construction the parameter $\eta$ is chosen as $\eta=\infty$ for the first order superposition 
 and it is taken small enough to make  $ Im(\phi(t, y)) \geq \delta |y|^2$ for $t \in [0, T]$ and 
 $|y|\leq 2\eta$  for higher order superpositions.
}
\red{For first order beams,  
$$
\phi(t, y)=S(t; X_0) +p(t; X_0)\cdot y+\frac{1}{2}y\cdot M(t; X_0)y, 
$$
associated with the first several ODEs defined by 
\begin{align*}
\dot x & =\partial_p H(x, p), \; \dot p = -\partial_x H(x, p) & (x(0), p(0))=X_0,\\
\dot S & =p\cdot \partial_p H(x, p)-H,  & S(0) =S(0; X_0),\\
\dot M & = -\partial^2_x H -M \partial^2_{xp}H -\partial_{px}^2H M-M\partial_p^2 M,\; & M(0)  =M(0; X_0).
%\dot a_0 & =  
\end{align*}
 }
\red{For equation (\ref{pp}), $H(x, p)=\pm c(x)|p|$, for which two wave modes need to be included in the superposition. We assume that 
$K_0$ does not intersect $\{(x, p)|\; p=0\}$.
%ince the central ray is well-defined in \cite{LRT13} 
%only under additional assumption 
%of  
% $$
% {\rm inf}_{X_0\in K_0}
%|p(0; X_0)| =\delta >0.
%$$   
No such assumption is needed for the Schr\"{o}dinger  equation with $H(x, p)=\frac{1}{2}|p|^2$.  These construction details will not be used in our error analysis, but may be helpful as a reference for reading examples constructed in sections 4-6. 
}

We now state the main result of the propagation error for superposition (\ref{uk}).

\begin{thm}\label{ee} Let $u_{GB}$ be the Gaussian beam superposition defined in  (\ref{uk}) based on $N$-th order beams \red{
emanating from a compact subset of the $m$-dimensional manifold $K_0$ in phase space}, and $u$ be the exact solution to $Pu=0$ subject to the initial data  $u_{GB}(x, 0)$,   we then have the following estimate on the propagation error,
\begin{equation}\label{result}
\|u_{GB}-u\|_S  \lesssim k^{1-N/2-(d-m)/4},
%\|u_k(x, 0)\|_S,
\end{equation}
%provided $\|u_k(x, 0)\|_S\sim k^{1-(m+d)/4}$.
where $m$ is the dimension of the domain on which initial beams are sampled, and $d$ is the spatial dimension.
\end{thm}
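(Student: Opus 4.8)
The plan is to combine the well-posedness estimate (\ref{eng}) with a sharp $L^2$ bound on $Pu_{GB}$ that exploits oscillatory cancellation among overlapping beams. Since $u$ solves $Pu=0$ with the same Cauchy data as $u_{GB}$ at $t=0$, the initial error vanishes, so $\|(u-u_{GB})(\cdot,0)\|_S=0$. Taking $t_1=0$, $t_2=t\le T$ in (\ref{eng}), and using $q=0$ for the wave equation, the theorem reduces to
$$
\sup_{\tau\in[0,T]}\|Pu_{GB}(\cdot,\tau)\|_{L^2}\lesssim k^{1-N/2-(d-m)/4}.
$$
Differentiating under the integral sign gives $Pu_{GB}=k^{m/2}\int_{K_0}Pv(\cdot;X_0)\,dX_0$, and by the defining property of an $N$th-order beam each $Pv=e^{ik\phi}\sum_{j}k^{2-j}L_j$, where $L_j$ vanishes to order $N+2-2j$ along the central ray $x=x(t;X_0)$.

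I would first record the single-beam scaling. The Taylor vanishing of $L_j$ gives, modulo the cutoff, $|L_j(t,x-x(t;X_0))|\lesssim |x-x(t;X_0)|^{N+2-2j}$ near the ray, while the terms arising when $P$ falls on $\rho_\eta$ are supported in $\eta\le|x-x(t;X_0)|\le 2\eta$, where $\mathrm{Im}\,\phi\ge\delta\eta^2$ forces $|e^{ik\phi}|=O(e^{-ck\eta^2})$ and hence are negligible to all polynomial orders in $k$. Combining this with the Gaussian concentration $|e^{ik\phi}|\le e^{-k\delta|x-x(t;X_0)|^2}$ and the elementary moment $\||y|^\alpha e^{-k\delta|y|^2}\|_{L^2(\mathbb{R}^d)}\sim k^{-\alpha/2-d/4}$ yields, with $\alpha=N+2-2j$ and the prefactor $k^{2-j}$, the single-beam bound $\|Pv\|_{L^2}\sim k^{1-N/2-d/4}$, the same order for every $j$.

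The heart of the matter, and the main obstacle, is that the triangle inequality is too lossy: $k^{m/2}\int_{K_0}\|Pv\|_{L^2}\,dX_0\sim k^{1-N/2-d/4+m/2}$ overshoots the claim by a factor $k^{m/4}$. Recovering the correct exponent requires the oscillatory-integral superposition estimate from \cite{LRT13}, which I would apply after writing, with $G=\sum_j k^{2-j}L_j$ and $t$ fixed,
$$
\|Pu_{GB}\|_{L^2}^2=k^{m}\int_{\mathbb{R}^d}\int_{K_0}\int_{K_0}G(x;X_0)\,\overline{G(x;X_0')}\,e^{ik(\phi(x;X_0)-\overline{\phi(x;X_0')})}\,dX_0\,dX_0'\,dx,
$$
and performing the $x$-integration first. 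Since $\mathrm{Im}\bigl(\phi(x;X_0)-\overline{\phi(x;X_0')}\bigr)\gtrsim|x-x(t;X_0)|^2+|x-x(t;X_0')|^2\gtrsim|x(t;X_0)-x(t;X_0')|^2$, the inner Gaussian integral contributes $k^{-d/2}$ together with exponential decay $e^{-ck|x(t;X_0)-x(t;X_0')|^2}$ in the ray separation; integrating this decay over the $m$-dimensional set $K_0$ in $X_0'$ contributes $k^{-m/2}$ — in contrast to the $O(1)$ implicit in the triangle bound — so the prefactor $k^m$ effectively collapses to $k^{m/2-d/2}$ and the norm acquires the decisive gain $k^{-(d-m)/4}$. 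Feeding the pointwise bound $|L_j|\lesssim|x-x(t;X_0)|^{N+2-2j}$ through this estimate gives, for each $j$,
$$
\Big\|k^{m/2}\int_{K_0}k^{2-j}L_j\,e^{ik\phi}\,dX_0\Big\|_{L^2}\lesssim k^{2-j}\,k^{-(N+2-2j)/2-(d-m)/4}=k^{1-N/2-(d-m)/4}.
$$
Summing the finitely many $j$ and inserting the result into (\ref{eng}) yields (\ref{result}). The real work thus lies in verifying that the phases, amplitudes, and the geometry of the immersion $X_0\mapsto x(t;X_0)$ meet, uniformly on $[0,T]$, the hypotheses of the \cite{LRT13} superposition estimate, since that lemma is precisely what supplies the almost-orthogonality gain $k^{-(d-m)/4}$.
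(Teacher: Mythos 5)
Your reduction via the well-posedness estimate (\ref{eng}), your single-beam moment computation, and your diagnosis that the triangle inequality loses a factor $k^{m/4}$ are all correct, and your squared-norm double-integral setup matches the paper's. But the mechanism you give for the almost-orthogonality gain has a genuine gap: you claim that the Gaussian decay $e^{-ck|x(t;X_0)-x(t;X_0')|^2}$ in the \emph{ray separation}, integrated over the $m$-dimensional set $K_0$, yields the factor $k^{-m/2}$. That step silently requires $|x(t;X_0)-x(t;X_0')|\gtrsim |X_0-X_0'|$, i.e.\ that the map $X_0\mapsto x(t;X_0)$ from phase space to physical space is uniformly bi-Lipschitz. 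This fails precisely at caustics, where rays from distinct initial points $X_0\neq X_0'$ cross, so that $x(t;X_0)=x(t;X_0')$ and the exponential provides no decay whatsoever in $|X_0-X_0'|$; the $X_0'$-integral is then $O(1)$, not $O(k^{-m/2})$, and your bound collapses back to the lossy triangle-inequality rate. Calling $X_0\mapsto x(t;X_0)$ an ``immersion'' whose hypotheses one merely has to verify on $[0,T]$ misses the point: it is \emph{not} an immersion at focal times, and excluding caustics would gut the theorem, since handling caustics is the main purpose of Gaussian beam superpositions (indeed the paper's own examples in Sections 5--6 are built around focus and fold caustics).

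The paper closes exactly this gap with a dichotomy you do not have. It introduces a partition of unity $\chi_1+\chi_2=1$ (equation (\ref{chidef})) according to whether $|\gamma-\gamma'|>\theta|X_0-X_0'|$ or $|\gamma-\gamma'|<\theta|X_0-X_0'|$. On the support of $\chi_1$ the argument is the one you describe, and it does give $k^{1-d/2-m/2}$ for $\mathcal I_1$. On the support of $\chi_2$ (the caustic regime) the needed decay comes from a completely different source: by the non-squeezing lemma (Lemma \ref{nonsqueezing}), full phase-space separation $|X_0-X_0'|$ is preserved in time, so when the \emph{positions} $\gamma,\gamma'$ are close the \emph{momenta} must be separated, giving the phase-gradient lower bound $|\nabla_x\psi|\geq C|X_0-X_0'|$ of Lemma \ref{PhaseEst}. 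Repeated integration by parts via the non-stationary phase lemma (Lemma \ref{statphase}) then produces the decay $\bigl(1+\sqrt{k}\,|X_0-X_0'|\bigr)^{-K}$, which, integrated over the $m$-dimensional $K_0$ with $K>m$, again yields $k^{-m/2}$. Without this second half — non-squeezing plus non-stationary phase on the set where rays focus — the claimed estimate (\ref{Izz}), and hence the theorem, is not proved.
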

\begin{rem} \red{Note that operator $P$ is initially defined in (\ref{pp}), but also used for  Schr\"{o}dinger operator in Section 2. 
This theorem includes the proof of Theorem \ref{1.2},  but it is also valid for the Schr\"{o}dinger equation due to the basic estimate (\ref{eng}) and the estimate of $\|Pu_{\rm GB}\|$ to be carried out in this section.
 }
\end{rem} 
We proceed to complete the proof of this theorem by following the general steps as  in the proof of \cite[Theorem 1.1]{LRT13}.
The main difference here is that the initial set $K_0$ can be rather arbitrary in phase space.  The way that distance between
beams is measured must here be allowed to vary smoothly with the beam's initial point in phase space.

Before we outline the proof of the above result, we present a result, which shows that the accuracy of the initial approximation can be treated separately.
\begin{cor}
Let $u_{GB}$ be the Gaussian beam superposition defined in  (\ref{uk}) based on $N$-th order beams, and $u$ be the exact solution to $Pu=0$ subject to a given initial data  $u(x, 0)$,  then
\begin{equation}\label{result+}
\|u_{GB}(\cdot, t)-u(\cdot, t)\|_S  \lesssim \|u_{GB}(\cdot, 0) -u(\cdot, 0)\|_S + k^{1-N/2-(d-m)/4}.
%\|u_{GB}(x, 0)\|_S,
\end{equation}
%provided $\|u_k(x, 0)\|_S\sim k^{1-(m+d)/4}$.
\end{cor}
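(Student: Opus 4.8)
The plan is to deduce the corollary directly from the proof of Theorem~\ref{ee} together with the well-posedness estimate~(\ref{eng}), simply by retaining the initial-error term that was arranged to vanish in Theorem~\ref{ee}. Concretely, I would apply~(\ref{eng}) with $v=u_{GB}$, $t_1=0$ and $t_2=t$, which gives
$$
\|(u-u_{GB})(\cdot,t)\|_S \le \|(u-u_{GB})(\cdot,0)\|_S + Ck^q\int_0^t \|Pu_{GB}(\cdot,\tau)\|_{L^2}\,d\tau.
$$
In Theorem~\ref{ee} the exact solution $u$ is taken to have initial data exactly $u_{GB}(x,0)$, so the first term on the right vanishes and the stated bound~(\ref{result}) is in effect a bound on the integral $\int_0^t\|Pu_{GB}\|_{L^2}\,d\tau$. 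In the present corollary $u$ instead solves $Pu=0$ with arbitrary prescribed data $u(x,0)$, so I would simply keep the initial term rather than discard it.

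The decisive observation is that the estimate $Ck^q\int_0^t\|Pu_{GB}(\cdot,\tau)\|_{L^2}\,d\tau \lesssim k^{1-N/2-(d-m)/4}$ established in the proof of Theorem~\ref{ee} depends only on the internal structure of the superposition $u_{GB}$ --- the order $N$ of the beams, the dimension $m$ of $K_0$, the spatial dimension $d$, and the ray geometry --- and is completely independent of the choice of initial data for the exact solution $u$. Hence that estimate for the integral remains valid verbatim here, and substituting it into the displayed inequality yields precisely~(\ref{result+}).

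An equivalent and perhaps more transparent route is to compare $u$ with the auxiliary exact solution $\tilde u$ that matches the beam data, i.e.\ $\tilde u(\cdot,0)=u_{GB}(\cdot,0)$. Theorem~\ref{ee} controls $\|u_{GB}(\cdot,t)-\tilde u(\cdot,t)\|_S$ by $k^{1-N/2-(d-m)/4}$, while $\tilde u-u$ is itself an exact solution, so the conservation law~(\ref{en}) gives $\|(\tilde u-u)(\cdot,t)\|_S=\|(\tilde u-u)(\cdot,0)\|_S=\|u_{GB}(\cdot,0)-u(\cdot,0)\|_S$ for all $t$, and one application of the triangle inequality produces~(\ref{result+}). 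In either formulation there is essentially no obstacle: all the analytic work resides in the estimate of $\|Pu_{GB}\|_{L^2}$ already carried out for Theorem~\ref{ee}, and the only point requiring care is to note explicitly that this estimate is insensitive to the data of $u$, so that the initial mismatch $\|u_{GB}(\cdot,0)-u(\cdot,0)\|_S$ decouples cleanly and enters additively.
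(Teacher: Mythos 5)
Your proposal is correct, and your second route (comparing $u$ with the auxiliary exact solution $\tilde u$ matching the beam data, applying Theorem~\ref{ee} to $u_{GB}-\tilde u$, energy conservation to $\tilde u - u$, and the triangle inequality) is exactly the paper's own proof, with $\tilde u$ playing the role of the paper's $w$. Your first route, which inlines the well-posedness estimate~(\ref{eng}) and keeps the initial term, is also valid and rests on the same observation that the residual bound $\|Pu_{GB}\|_{L^2}$ is independent of the data of $u$, so there is no substantive difference from the paper's argument.
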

\begin{proof} Let $w$ be another exact solution with initial data $u_{GB}(x, 0)$, then we have
$$
\|u_{GB}(\cdot, t)-w(\cdot, t)\|_S  \lesssim k^{1-N/2 -\frac{d-m}{4}}.
%\|u_k(x, 0)\|_S.
$$
The energy conservation tells that
$$
\|u(\cdot, t)-w(\cdot, t)\|_S=\|u(\cdot, 0)-w(\cdot, 0)\|_S=\|u(\cdot, 0)-u_{GB}(\cdot, 0)\|_S.
$$
These combined with the triangle inequality
$$
\|u_{GB}(\cdot, t)-u(\cdot, t)\|_S \leq \|u(\cdot, t)-w(\cdot, t)\|_S+\|u_{GB}(\cdot, t)-w(\cdot, t)\|_S
$$
lead to (\ref{result+}).
\end{proof}
 In this section, we focus only on the residual error,  where the residual can be written (following the notation of Liu, Runborg and Tenushev \cite{LRT13} and
Liu, Ralston,  Runborg and Tanushev\cite{LRRT14}) in the form
\begin{equation}\label{fres}
Pu_{GB}  = k^{m/2}\int_{K_0} [Pv(x, t; X_0)]dX_0,
\end{equation}
where $Pv(x, t;X_0)$ is a finite sum of terms of the form
\begin{align*}
%&=\left(e^{ik\phi^+(x;z)}{\sum_{j=-2}^{\ell}}c^+_j(x;z)k^{-j}+e^{ik\phi^-(x;z)}{\sum_{j=-2}^{\ell}}c^-_{j}(x;z)k^{-j}\right)
%\varrho_\eta \left(x-\gamma\right) + O(k^{-\infty})\\
f_{GB}&= k^{j}g(x, t; X_0)(x-\gamma)^{\beta}e^{ik\phi(x, t; X_0)} + O(k^{-\infty}),
\qquad
\end{align*}
with bounds
$$
{|\beta |\leq N+2,\qquad
2j \leq {2-N+|\beta|}.}
$$
Here $g$ is smooth and supported or at least bounded on
$$
\Omega(\tilde \eta, X_0):=\{x|\quad  |x- \gamma|\leq \tilde \eta\}, \quad \gamma=x(t; X_0),
$$
and $\phi$ is the $N$-th order Gaussian beam phase.   Here $\tilde \eta$ is chosen as a small number for first order beams, but can be taken as $\eta$ for higher order beams.  Moreover, $O(k^{-\infty})$ indicates terms exponentially small in $1/k$.  After neglecting these terms and
using (\ref{fres}) we can bound the $L^2$ norm of $P[u_{GB}]$ by
\begin{align*}
\|P[u_{GB}] \|^2_{L_x^2}
&\lesssim
k^m \left\|
\int_{K_0}{k^{\frac{2-N+|\beta|}2}}
e^{ik\phi}g(x-\gamma)^{\beta}
dX_0 \right\|^2_{L_x^2}\\
& \lesssim  {k^{m+ 1-N}}
\int_{\mathbb{R}^d_x} \int_{K_0}\int_{K_0} I(t, x,X_0, X_0')dX_0dX_0'dx,
\end{align*}
where the term $I$ is of the form
\begin{align*}
  I(x, t,X_0, X_0') &= k^{1+|\beta|} e^{ik \psi(x, t,X_0,X_0')}
       g(x, t; X_0')\overline{g(x, t; X_0)} \\
    &\qquad  \times  \left(x-\gamma \right)^\beta \left(x- \gamma' \right)^\beta, \quad |\beta|\leq N+2.
\end{align*}
Here
\begin{align}\label{ps}
\psi(x, t, X_0, X_0'):= \phi(x, t; X_0')-\overline{\phi(x, t; X_0)}.
\end{align}
The function $g$ and its derivatives are bounded, for $0\leq t\leq T$,
\begin{equation}\label{g}
\sup_{X_0\in K_0, x\in {\Omega(\tilde \eta;X_0)}}|\partial_x^\alpha g(x, t;X_0)| \leq  C_\alpha.
\end{equation}
The rest of this section is dedicated to establishing the
following {inequality}
\begin{equation}\label{Izz}
\left| \int_{\mathbb{R}^d_x }\int_{K_0}\int_{K_0} I(x, t,X_0, X_0') dX_0dX_0' dx \right| \lesssim k^{1-d/2-m/2}.
\end{equation}
 With this estimate  we  have
 $$
 \|P[u_{GB}]\|_{L_x^2}\lesssim k^{1-N/2 -\frac{d-m}{4}},
 $$
which together with the well-posedness estimate (\ref{eng}) leads to the desired estimate (\ref{result}).

\begin{lem}[Non-squeezing lemma]\label{nonsqueezing}
Let  $X=(x(t; X_0), p(t; X_0))$ be the Hamiltonian trajectory starting from $X_0 \in K_0$ with $K_0$ bounded.
Assume that $X(0; X_0)\in C^2(K_0)$.  Then,
\begin{align}\label{NonSqueezeIneq}
   |X_0-X_0'| \sim  |X(t, X_0) -X(t, X_0')|, \quad \forall X_0, X_0'\in K_0.
\end{align}
\end{lem}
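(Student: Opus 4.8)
The plan is to prove the two-sided bound in~\eqref{NonSqueezeIneq} by showing that the map $X_0 \mapsto X(t;X_0)$ is, for each fixed $t\in[0,T]$, a bi-Lipschitz diffeomorphism onto its image, uniformly in $t$, with Lipschitz constants that do not degenerate. The key structural fact I would exploit is that the Hamiltonian flow $\Phi_t:X_0\mapsto X(t;X_0)$ is a one-parameter family of smooth maps that is smoothly invertible: its inverse is simply the backward flow $\Phi_{-t}$. So the strategy splits into two symmetric halves, an upper bound $|X(t,X_0)-X(t,X_0')|\lesssim|X_0-X_0'|$ and a lower bound $|X_0-X_0'|\lesssim|X(t,X_0)-X(t,X_0')|$, and the lower bound for $\Phi_t$ is exactly the upper bound applied to $\Phi_{-t}$.

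For the upper bound I would first write the difference of two trajectories using the flow map restricted to $K_0$. Since $X(0;\cdot)\in C^2(K_0)$ and the Hamiltonian vector field is smooth, the variational equation (the linearization of the ODE system along a trajectory) shows that the Jacobian $D_{X_0}\Phi_t$ exists, is continuous in $(t,X_0)$, and satisfies a linear matrix ODE; hence it is bounded on the compact set $[0,T]\times K_0$. By the fundamental theorem of calculus along the segment joining $X_0$ to $X_0'$ (or, if $K_0$ is not convex, along a curve in $K_0$ whose length is comparable to $|X_0-X_0'|$, using compactness and smoothness of $K_0$ to control the ratio of intrinsic to chordal distance), this Jacobian bound integrates to the Lipschitz estimate with a constant uniform in $t\in[0,T]$. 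A cleaner alternative that avoids any convexity assumption is a Gronwall argument: set $Y(t)=X(t;X_0)-X(t;X_0')$, differentiate, use the mean value form of the Hamiltonian vector field together with the uniform bound on its first derivatives over the relevant compact region in phase space, and conclude $|Y(t)|\le e^{Lt}|Y(0)|$ for $t\in[0,T]$, which is the desired upper bound.

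For the lower bound I would run the identical Gronwall argument on the backward flow: the trajectory through $X_0$ at time $t$ flows back to $X_0$ at time $0$, so with $Z(s)=X(t-s;X_0)-X(t-s;X_0')$ the same estimate gives $|Z(t)|\le e^{Lt}|Z(0)|$, i.e. $|X_0-X_0'|\le e^{LT}|X(t,X_0)-X(t,X_0')|$. Combining the two inequalities yields the equivalence~\eqref{NonSqueezeIneq}, with implied constants depending only on $T$, on $C^2$ bounds for the parametrization of $K_0$, and on bounds for the Hamiltonian vector field on the compact phase-space region swept out by the flow over $[0,T]$.

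The main obstacle I anticipate is not the Gronwall mechanics but securing the \emph{uniform} control of the Hamiltonian vector field and its derivatives on the region actually traversed. For the acoustic case $H(x,p)=\pm c(x)|p|$, the symbol is singular on $\{p=0\}$, which is precisely why the standing hypothesis that $K_0$ avoids $\{p=0\}$ is imposed. I would therefore need to verify that the flow starting from a compact $K_0\subset\{p\neq 0\}$ remains, for $t\in[0,T]$, inside a compact subset of $\{p\neq 0\}$ on which $H$ is smooth with bounded derivatives; this follows because $|p|$ is preserved up to factors controlled by $\dot p=-\partial_x H$ and $c$ is bounded above and below, so no trajectory can reach $p=0$ in finite time and the swept region stays compact. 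Once that region is pinned down, the Lipschitz constant $L$ is finite and the argument closes.
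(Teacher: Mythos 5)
Your argument is correct, and it is essentially the proof this paper relies on: the paper does not prove Lemma~\ref{nonsqueezing} itself but cites \cite{LRT13}, where the result is established by exactly this bi-Lipschitz flow argument---a Gronwall/Jacobian bound for the forward flow giving one inequality, and the same bound applied to the backward flow giving the other. Your additional verification that trajectories launched from a compact $K_0$ avoiding $\{p=0\}$ sweep out a compact region on which $H=\pm c(x)|p|$ has uniformly bounded derivatives is precisely the point needed to make the Lipschitz constants uniform for the acoustic Hamiltonian, so no gap remains.
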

The non-squeezing lemma \cite{LRT13} says that the distance in phase space between two smooth Hamiltonian trajectories will not shrink from its initial distance.
Here one may take any $l^p$ distance since from $X-X'=(x-x', 0)+(0, p-p')$ we have
$$
d(X, X') \leq d(x, x')+d(p, p').
$$
We recall  some main estimates from \cite{LRT13} for proving
(\ref{Izz}).

\begin{lem}[Phase estimates]\label{PhaseEst} Let $\tilde\eta$ be small and {$x\in D(\tilde\eta,X_0,X_0')$} with
$$
{D(\tilde\eta,X_0,X_0') = \Omega(\tilde\eta,X_0) \cap \Omega(\tilde\eta,X_0').}
$$
\begin{itemize}
\item For all $X_0,X_0'\in K_0$ {and sufficiently small $\tilde \eta$}, there exists a constant $\delta$ independent of $k$ such that
$$
\Im \psi \left(x, t, X_0,X_0'\right) \geq\  \delta\left[\left|x- \gamma \right|^2+\left|x-\gamma' \right|^2\right].
$$
\item For $|\gamma(x, t; X_0)-\gamma(x, t; X_0')|< \theta |X_0-X_0'|$,
\begin{align*}
 |\nabla_x\psi(x, t,X_0,X_0')| \geq C(\theta,\tilde\eta)|X_0-X_0'|,
\end{align*}
where $C(\theta,\tilde \eta)$ is independent of $x$ and positive if $\theta$ and $\tilde \eta$ are sufficiently small.
\end{itemize}
\end{lem}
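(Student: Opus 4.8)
The plan is to prove the two assertions separately, exploiting the fact that along the central ray only the Hessian $M$ (and, for $N>1$, the higher Taylor coefficients) of the beam phase is complex, whereas the action $S$, the position $\gamma=x(t;X_0)$ and the momentum $p=p(t;X_0)$ are real. For the first assertion, since $\overline{\phi(x,t;X_0)}$ has imaginary part $-\Im\phi(x,t;X_0)$, one has $\Im\psi=\Im\phi(x,t;X_0')+\Im\phi(x,t;X_0)$. For first order beams $\phi=S+p\cdot y+\tfrac12 y\cdot M y$ with $y=x-\gamma$, so $\Im\phi=\tfrac12 y\cdot(\Im M)y$, and the uniform positive definiteness of $\Im M$ on the compact set $[0,T]\times K_0$ supplies a $\delta>0$ with $\Im\phi(x,t;X_0)\ge\delta|x-\gamma|^2$; for higher order beams the same bound on $|y|\le 2\eta$ is exactly the property for which $\eta$ was chosen small in the construction. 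Adding the two inequalities gives the first claim.

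The second assertion is the heart of the lemma, and the key idea is to pass to the real part. Because $\Re\overline z=\Re z$, we have $\Re\nabla_x\psi=\Re\nabla_x\phi(x,t;X_0')-\Re\nabla_x\phi(x,t;X_0)$. Writing $F(x,X_0):=\Re\nabla_x\phi(x,t;X_0)=p(t;X_0)+(\Re M)(x-\gamma)+R(x,X_0)$, where $R=O(|x-\gamma|^2)$ collects the higher Taylor terms and $R\equiv 0$ when $N=1$, and using $x-\gamma'=y-\Delta\gamma$ with $\Delta\gamma=\gamma-\gamma'$, one finds $\Re\nabla_x\psi=\Delta p-(\Re M')\Delta\gamma+(\Re M'-\Re M)\,y+(R(x,X_0')-R(x,X_0))$, where $\Delta p=p(t;X_0')-p(t;X_0)$.

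Next I would estimate these terms. The leading term $\Delta p$ is bounded below through the non-squeezing \lemref{nonsqueezing}: since $|X_0-X_0'|\lesssim|\Delta\gamma|+|\Delta p|$ there is a fixed $c_1>0$ with $c_1|X_0-X_0'|\le|\Delta\gamma|+|\Delta p|$, and together with $|\Delta\gamma|<\theta|X_0-X_0'|$ this yields $|\Delta p|\ge(c_1-\theta)|X_0-X_0'|$. The three remaining terms are lower order: the smooth ($C^2$) dependence of the beam data on $X_0$ over the compact $K_0$ makes $\|M'\|$ uniformly bounded and $M,R$ uniformly Lipschitz in $X_0$, so with $|\Delta\gamma|<\theta|X_0-X_0'|$ and $|y|\le\tilde\eta$ they are bounded by $(C_M\theta+L\tilde\eta)|X_0-X_0'|$. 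Hence $|\nabla_x\psi|\ge|\Re\nabla_x\psi|\ge(c_1-\theta-C_M\theta-L\tilde\eta)|X_0-X_0'|$, which is the claim with $C(\theta,\tilde\eta)>0$ once $\theta$ and $\tilde\eta$ are taken small.

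The main obstacle is making sure the purely geometric quantity $\Delta p$ genuinely dominates. The dangerous contribution is the Hessian term $(M-\overline M)y=2i(\Im M)y$, which is only $O(\tilde\eta)$ and hence not small compared with $|X_0-X_0'|$; the decisive point is that it is purely imaginary and therefore drops out upon taking the real part, leaving the Lipschitz-small difference $(\Re M'-\Re M)y=O(\tilde\eta\,|X_0-X_0'|)$. The one genuinely nontrivial ingredient beyond this bookkeeping is \lemref{nonsqueezing}, which forbids the momenta from collapsing once the positions are forced to be close. Because $K_0$ is a general $m$-dimensional submanifold, one must also check that all boundedness and Lipschitz constants are uniform; this follows from the $C^2$ dependence of $X_0\mapsto(p,M,R)$ for $X_0\in K_0$ and $t\in[0,T]$, which is exactly the smooth variation of the beam-to-beam distance alluded to before the statement.
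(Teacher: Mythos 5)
Your proof is correct and takes essentially the same route as the argument the paper relies on: the paper states this lemma without proof, recalling it from \cite{LRT13}, and your two steps --- uniform positivity of $\Im M$ (together with the choice of $\eta$ in the higher-order construction) giving $\Im\psi=\Im\phi+\Im\phi'\ge\delta(|x-\gamma|^2+|x-\gamma'|^2)$, and the real-part reduction of $\nabla_x\psi$ combined with Lemma~\ref{nonsqueezing} so that $\Delta p$ dominates the $O(\theta)$ and $O(\tilde\eta)$ remainder terms --- are precisely the ingredients of that proof, including the key observation that the non-small term $2i(\Im M)y$ is purely imaginary and is eliminated by taking real parts. The only blemish is the harmless sign slip $x-\gamma'=y+\Delta\gamma$ (not $y-\Delta\gamma$) with $\Delta\gamma=\gamma-\gamma'$, which affects none of the magnitude estimates.
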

Decompose $I$ as
$$
I(x, t,X_0, X_0')=I_1+I_2,
$$
with
$$
I_j=\chi_j(x, t, X_0,X_0')I(x, t,X_0, X_0'),\quad \chi_1+\chi_2 = 1,
$$
where $\chi_j(x, t, X_0,X_0')\in C^{\infty}$ is a partition
of unity such that
\begin{equation}\label{chidef}
  \chi_1(x, t, X_0,X_0') = \begin{cases} 1, & {\rm when}\ |\gamma(x, t,X_0)-\gamma(x, t,X_0')|> \theta |X_0-X_0'|,\\
 0, & {\rm when}\ |\gamma(x, t,X_0)-\gamma(x, t,X_0')|< \frac12\theta |X_0-X_0'|.
  \end{cases}
\end{equation}
We first estimate $I_1$,  which corresponds to the non-caustic region of the solution.
\begin{align*}
{\mathcal I}_1  &:=\left| \int_{\mathbb{R}^d_x}\int_{K_0}\int_{K_0} I_1(x, t,X_0, X_0') dX_0dX_0'dx \right|\\
% &\leq  k^{1+|\beta|}
 %{\int_{K_0}\int_{K_0}\int_{D(\eta,X_0,X_0')}}
 % \chi_1(x,X_0,X_0')
  % e^{ik \psi(t, x,X_0,X_0')} g(t, x; X_0') \overline{g(t, x;X_0)} \\
 %& \qquad\qquad \times (x-\gamma)^\beta (x-\gamma')^\beta  dxdX_0dX_0'\\
 & \lesssim
  k^{1+|\beta|}
  {\int_{K_0}\int_{K_0}\int_{D(\eta,X_0,X_0')}}
  \chi_1|x-\gamma|^{|\beta|} |x-\gamma'|^{|\beta|}e^{-\delta k (|x-\gamma|^2+|x-\gamma'|^2)}dxdX_0dX_0'\\
% \left|{\mathcal I}_1\right| &  \lesssim   k^{|\beta|+1} \left(\frac{1}{k\delta}\right)^{|\beta|}
 %{\int_{K_0}\int_{K_0}\int_{D(\eta,X_0,X_0')}}
 %  \chi_1(x,X_0,X_0')
% e^{-\frac{\delta k}{2}(|x-\gamma|^2+|x-\gamma'|^2)} \ dxdX_0dX_0' \\
  % & \lesssim   k
   %   {\int_{K_0}\int_{K_0}\int_{D(\eta,X_0,X_0')}}
    %  \chi_1(x,X_0,X_0')
   %   e^{-\frac{\delta k}{4}(|x-\gamma|^2+|x-\gamma'|^2)} e^{-\frac{\delta k}{8}|\gamma-\gamma'|^2} e^{-\frac{\delta k}{2}  |x-\bar \gamma|^2|} \ dxdX_0dX_0'\\
     & \lesssim   k
      {\int_{K_0}\int_{K_0}\int_{D(\eta,X_0,X_0')}}
      \chi_1
   e^{-\frac{\delta k}{2}(|x-\gamma|^2+|x-\gamma'|^2)}
dxdX_0dX_0' \\
   & \lesssim   k
      {\int_{K_0}\int_{K_0}\int_{D(\eta,X_0,X_0')}}
      \chi_1
   e^{-\frac{\delta k}{4}(|x-\gamma|^2+|x-\gamma'|^2)} e^{-\frac{\delta k}{8} |\gamma-\gamma'|^2} \
dxdX_0dX_0' \\
   & \lesssim   k
      {\int_{K_0}\int_{K_0}e^{-\frac{\delta k}{8} \theta^2 |X_0-X_0'|^2} \int_{D(\tilde\eta,X_0,X_0')}}
   e^{-\frac{\delta k}{4}(|x-\gamma|^2+|x-\gamma'|^2)}  \ dxdX_0dX_0'.
\end{align*}
Here we have used the fact that
$|\gamma-\gamma'|> \theta|X_0-X_0'|$ on the support of $\chi_1$. For the inner integral over
 $D=\Omega(\tilde\eta;X_0)\cap \Omega(\tilde\eta;X_0')$, we have
\begin{align*}
{\int_{D(\tilde\eta,X_0,X_0')}}
   e^{-\frac{\delta k}{4}(|x-\gamma|^2+|x-\gamma'|^2)}dx &\leq
   \left(
   \int_{{\Omega(\tilde\eta;X_0)}}
   e^{-\frac{\delta k}{2}(|x-\gamma|^2)}dx
   \int_{{\Omega(\tilde\eta;X_0')}}
   e^{-\frac{\delta k}{2}(|x-\gamma'|^2)}dx\right)^{1/2} \\
   & \lesssim k^{-d/2}.
\end{align*}
%By a change of local coordinates we can show that
%\begin{equation}\label{degat}
%\int_{{\Omega(\eta;X_0)}} e^{-\frac{\delta k}{4}|x-\gamma|^2}  \ dx \lesssim k^{(-d)/2}.
%\end{equation}
From this it follows that
\begin{equation}\label{I1x}
|{\mathcal I}_1|\lesssim k^{(2-d)/2}
      \int_{K_0}\int_{K_0}
   e^{-\frac{\delta k}{8} \theta^2 |X_0-X_0'|^2} \ dX_0dX_0'.
\end{equation}
Letting $\Lambda =\sup_{X_0,X_0'\in K_0}|X_0-X_0'|<
\infty$ be the diameter of $X_0$, we have
\begin{align*}
|{\mathcal I}_1| & \lesssim  k^{(2-d)/2}
      \int_{K_0}\int_{K_0}
   e^{-\frac{\delta k}{8} \theta^2 |X_0-X_0'|^2} \ dX_0dX_0'\\
 & \lesssim k^{(2-d)/2}\int_0^\Lambda \tau^{m-1}e^{-\frac{k \delta\theta^2}{8}\tau^2} d\tau\\
&\lesssim  k^{1-d/2-m/2},
\end{align*}
which concludes the estimate of ${\mathcal I}_1$.

In order to estimate ${\mathcal I}_2$ we use a version of
the non-stationary phase lemma.
\begin{lem}[Non-stationary phase lemma]\label{statphase}
Suppose that $u(x;\zeta)\in C_0^\infty(\Omega
\times Z)$, where $\Omega$ and $Z$ are compact
sets and ${\psi(x; \zeta)\in C^\infty(O)}$ for
some open neighborhood $O$ of $\Omega \times Z$.
If $\nabla_x \psi$ never vanishes in $O$, then
for any $K=0,1,\ldots$,
\begin{align*}
   \left|\int_\Omega u(x; \zeta)e^{i k\psi(x; \zeta)}dx \right|
   \leq C_K k^{-K}  \sum_{|\alpha|\leq K}\int_\Omega \frac{|\partial_x^{\alpha}u(x; \zeta)|}{|\nabla_x\psi(x;\zeta)|^{2K-|\alpha|}}
   e^{- k \Im \psi(x; \zeta)}dx\ ,
\end{align*}
where $C_K$ is a constant independent of $\zeta$.
\end{lem}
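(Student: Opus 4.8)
The plan is to prove the estimate by repeated integration by parts against a first-order differential operator that reproduces the oscillatory factor. Since $\nabla_x\psi$ is nowhere zero on $O$, I would introduce
\[
L := \frac{1}{ik\,|\nabla_x\psi|^2}\,\overline{\nabla_x\psi}\cdot\nabla_x,
\qquad
a_j := \frac{\overline{\partial_{x_j}\psi}}{|\nabla_x\psi|^2},
\]
so that $L$ is well defined, and, because $\sum_j\overline{\partial_{x_j}\psi}\,\partial_{x_j}\psi = |\nabla_x\psi|^2$, one has the reproducing identity $L\,e^{ik\psi}=e^{ik\psi}$. Its formal transpose acting on $C_0^\infty(\Omega)$ is ${}^tL\,g=-(ik)^{-1}\sum_j\partial_{x_j}(a_j g)$. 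First I would record the elementary facts $|a_j|\lesssim|\nabla_x\psi|^{-1}$ and, more generally, $|\partial_x^\gamma(|\nabla_x\psi|^{-s})|\lesssim|\nabla_x\psi|^{-s-|\gamma|}$; the latter follows from $|\partial_{x_j}|\nabla_x\psi|^2|\lesssim|\nabla_x\psi|$ together with the Leibniz rule, and holds uniformly because every derivative of $\psi$ is bounded on the compact set $\overline{\Omega\times Z}\subset O$ (so that each implied constant is independent of $\zeta$).

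Next, using that $u(\cdot;\zeta)$ is compactly supported in $\Omega$, I would integrate by parts $K$ times to obtain
\[
\int_\Omega u\,e^{ik\psi}\,dx = \int_\Omega \big(({}^tL)^K u\big)\,e^{ik\psi}\,dx,
\]
and then take absolute values, using $|e^{ik\psi}|=e^{-k\,\Im\psi}$. The heart of the proof is a structural claim, established by induction on $K$:
\[
({}^tL)^K u = (ik)^{-K}\sum_{|\alpha|\le K} c_{\alpha,K}\,\partial_x^\alpha u,
\qquad
|\partial_x^\gamma c_{\alpha,K}|\lesssim |\nabla_x\psi|^{-(2K-|\alpha|+|\gamma|)},
\]
with constants depending only on finitely many sup-bounds of derivatives of $\psi$. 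Applying ${}^tL$ to this expression produces, for each $\alpha$, one term $-a_j c_{\alpha,K}\partial_x^{\alpha+e_j}u$ (raising $|\alpha|$ by one and the exponent by one, so $2K-|\alpha|+1 = 2(K+1)-|\alpha+e_j|$) and one term $-\partial_{x_j}(a_j c_{\alpha,K})\partial_x^\alpha u$ (keeping $|\alpha|$ and raising the exponent by two, via $|\partial_{x_j}a_j|\lesssim|\nabla_x\psi|^{-2}$ and the one-derivative bound on $c_{\alpha,K}$). Both cases match the target exponent $2(K+1)-|\alpha'|$, and the homogeneity estimate of the first paragraph shows that the derivative bounds on the new coefficients propagate, closing the induction.

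The main obstacle is the exact power counting in this induction: one must verify that every differentiation of the rational coefficients gains precisely one factor of $|\nabla_x\psi|^{-1}$, so that after $K$ steps the coefficient multiplying $\partial_x^\alpha u$ is controlled by exactly $|\nabla_x\psi|^{-(2K-|\alpha|)}$ and no more. This is delicate because $|\nabla_x\psi|$ is not bounded below on $O$, so the argument cannot rely on absorbing extra negative powers; it rests entirely on the pointwise homogeneity estimate $|\partial_x^\gamma(|\nabla_x\psi|^{-s})|\lesssim|\nabla_x\psi|^{-s-|\gamma|}$, which is valid at every point of $O$ regardless of the size of $|\nabla_x\psi|$. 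Combining the structural claim with the integration-by-parts identity and $|e^{ik\psi}|=e^{-k\,\Im\psi}$ then yields the stated inequality, with $C_K$ independent of $\zeta$ by compactness of $\Omega\times Z$.
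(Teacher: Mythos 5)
Your proposal is correct and takes essentially the same route as the source: the paper states this lemma without proof, citing \cite{LRT13}, and the proof there is exactly your argument---repeated integration by parts against the operator $L=(ik|\nabla_x\psi|^2)^{-1}\,\overline{\nabla_x\psi}\cdot\nabla_x$, which reproduces $e^{ik\psi}$, combined with an induction showing that the coefficient of $\partial_x^\alpha u$ in $({}^tL)^K u$, together with all its derivatives, is controlled by the corresponding negative power of $|\nabla_x\psi|$, with constants depending only on sup-norms of derivatives of $\psi$ so that $C_K$ is independent of $\zeta$. One small remark: your homogeneity estimate $|\partial_x^\gamma(|\nabla_x\psi|^{-s})|\lesssim|\nabla_x\psi|^{-s-|\gamma|}$ also uses that $|\nabla_x\psi|$ is bounded \emph{above} on the compact set (derivatives of $|\nabla_x\psi|^2$ of order at least two are only $O(1)$, not $O(|\nabla_x\psi|)$, so lower negative powers must be absorbed into higher ones), but this is covered by the compactness you already invoke.
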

We now define
\begin{align*}
{\mathcal I}_2 &:= \int_{\mathbb{R}^d_x} I_2(x, t,X_0,X_0')dx\\
&= k^{1+|\beta|}
  \int_{{D(\tilde \eta,X_0,X_0')}}\chi_2
  e^{ik \psi(x, t,X_0,X_0')} g(x, t; X_0') \overline{g(x, t;X_0)}  (x-\gamma)^\beta (x-\gamma')^\beta dx.
\end{align*}
Non-stationary phase Lemma \ref{statphase} can be applied to ${\mathcal I}_2$
with $\zeta=(X_0,X_0')\in K_0 \times K_0 $ to give,
\begin{align*}
 \left|{\mathcal I}_2 \right|
 &\lesssim k^{1+|\beta|-K}  \sum_{|\alpha| \leq K} \int_{{D(\tilde \eta,X_0,X_0')}}\frac{\left|\partial^\alpha_x \left[(x-\gamma)^\beta(x-\gamma')^\beta \chi_2 g' \overline{g}\right]\right|}{|\nabla_x\psi(t, x,X_0,X_0')|^{2K-|\alpha|}}e^{-\Im k \psi(t, x,X_0,X_0')} dx \\
% & \lesssim \cdots \\
&  \lesssim  k^{1-d/2}\sum_{|\alpha|\leq K}\frac{1}{(|X_0-X_0'|\sqrt{k})^{2K-|\alpha|}}.
\end{align*}
On the support of $\chi_2$ the difference $|X_0-X_0'|$ can be arbitrary small, in which case
this estimate is not useful.  Following \cite{LRT13}, we use the fact that  the estimate is true also for
$K=0$ so that ${\mathcal I}_2 $ can be bounded by the minimum of the $K=0$ and $K>0$ estimates.
Therefore,
\begin{align*}
\left|{\mathcal I}_2  \right|
&\lesssim  k^{1-d/2}\min\left[1,\sum_{|\alpha|\leq K}\frac{1}{\left(|X_0-X_0'|\sqrt{k}\right)^{2K-|\alpha|}}\right] \\
% &\leq C k^{(3-d)/2}\sum_{|\lambda|\leq K}\min\left[1,\frac{1}{\left(|z-z'|\sqrt{k}\right)^{2K-|\lambda|}}\right] \\
% &\leq C k^{(3-d)/2}\sum_{|\lambda|\leq K}\frac{1}{1+\left(|z-z'|\sqrt{k}\right)^{2K-|\lambda|}}
& \lesssim \frac{k^{1-d/2}}{1+\left(|X_0-X_0'|\sqrt{k}\right)^{K}} \ .
\end{align*}
Finally, letting $\Lambda =\sup_{X_0,X_0'\in
K_0}|X_0-X_0'|< \infty$ be the diameter of
$K_0$, we compute
\begin{align*}
\int_{K_0} \int_{K_0} \left| {\mathcal I}_2 \right| dX_0dX_0'
&\lesssim k^{\frac{2-d}{2}}\int_{K_0 \times K_0} \frac{1}{1 + \left(|X_0-X_0'|\sqrt{k}\right)^K} dX_0dX_0'\\
&\lesssim k^{\frac{2-d}{2}}\int_0^\Lambda \frac{1}{1 + (\tau \sqrt{k})^K}\tau^{m-1} d\tau \\
& \lesssim k^{\frac{2-d-m}{2}} \int_0^\infty \frac{\xi^{m-1}}{1+\xi^K}d\xi \\
& \lesssim  k^{\frac{2-d-m}{2}} \ ,
\end{align*}
if we take $K>m$. This shows the ${\mathcal I}_2$
estimate,  which proves claim (\ref{Izz}).

\section{Example of a Gaussian beam superposition}
Let $r=|x|$, $x\in \mathbb{R}^3$. Then for any smooth function $f$,
$$
u(x,t)=(f(t-r)-f(t+r))/r
$$
satisfies $\partial_t^2 u=\Delta u$. Take
$f(r)=\exp(-ikr-kr^2/2)/k$. Then
$$
u(x,0)=2i{\sin(kr)\over kr} e^{-kr^2/2}\hbox{ and } \partial_t u(x,0)=2\left({\sin(kr)\over r}+\cos(kr)\right)e^{-kr^2/2}.
$$
The exact solution here is a highly oscillatory spherical wave which concentrates on $r=|t|$ as $k\to\infty$. The Cauchy data of this solution at $t = 0$ can be approximated very well by a superposition of Gaussian beams.
%The exact solution here is a highly oscillatory, highly concentrated spherical wave. It contracts as $t$ increases until $t=0$ and then expands. In the limit $k\to\infty$ it corresponds to a solution of $\partial_t^2 u=\Delta u$ with a focus singularity -- the strongest caustic -- at $(x,t)=(0,0)$. The Cauchy data of this solution at $t=0$ can be approximated very well by a superposition of Gaussian beams.

Note that $$\int_{S^2}e^{ikx\cdot\omega}d\omega = 4\pi {\sin(kr)\over kr}, $$  since the integral is a radial solution of $\Delta w+k^2w=0$,  which equals $4\pi$ at $x=0$, then we have
$$
u(x,0)=\int_{\mathbb{S}^2}v(x,0;\omega)d\omega,
$$
where
\begin{equation}
v(x,0;\omega)={i\over 2\pi}\exp(ikx\cdot\omega-k|x|^2/2).
\end{equation}
Let us approximate $u(x,t)$ by a superposition of beams
$$
u_{GB}(x,t)=\int_{\mathbb{S}^2}v(x,t;\omega)d\omega.
$$
 Hence $u_{GB}(x,0)=u(x,0)$. It will turn out, somewhat surprisingly, that $\partial_t u_{GB}(x,0)$ is very close to $\partial_t u(x,0)$. In fact, the first order Gaussian beam can be explicitly given as
 $$
 v(x,t;\omega)=a(t)e^{ik\phi(x,t;\omega)},
 $$
 where
$$
\phi(x,t;\omega)=x\cdot\omega-t+{i\over 2} \left( (x\cdot \omega-t)^2+{1\over 1+it}(|x|^2-(x\cdot\omega)^2)\right),
$$
and $2\pi a(t)=i (1+it)^{-1}$.  Note that $ \partial_t v =(ik \partial_t \phi a+\partial_t a)e^{ik\phi}, \hbox{ so }$
$$
\partial_ t v(0,x; \omega)={k\over 2\pi} \left( 1+ix\cdot\omega+{1\over 2}(|x|^2-(x\cdot \omega)^2)+\frac{1}{k} \right)e^{ikx\cdot\omega-k|x|^2/2}.
$$
Now we can compute
\begin{align*}
\partial_t u_{GB}(x,0)& =\int_{\mathbb{S}^2} \partial_t v(x,0;k,\omega)d\omega \\
& ={k\over 2\pi} e^{-kr^2/2} \left(1 + \frac{d}{dk} + \frac{r^2}{2} +\frac{1}{2} \frac{d^2}{dk^2} + \frac{1}{k} \right) \int_{\mathbb{S}^2}e^{ikx\cdot\omega}d\omega.
\end{align*}
Using
$$
%\int_{S^2}ix\cdot \omega e^{ikx\cdot\omega}d\omega=
{d\over dk}\int_{\mathbb{S}^2}e^{ikx\cdot\omega}d\omega = 4\pi \left({\cos(kr)\over k}-{\sin(kr)\over k^2r}\right),
$$
we have
\begin{align*}
 \partial_t u_{GB}(x,0)& =2k \bigg( {\sin(kr)\over kr}+ ({\cos(kr)\over k}-{\sin(kr)\over k^2r}) +\frac{r^2}{2} {\sin(kr)\over kr} \\
& \qquad  +\frac{1}{2} {d\over dk}\left({\cos(kr)\over k}-{\sin(kr)\over k^2r}\right) + {\sin(kr)\over k^2r}\bigg)e^{-kr^2/2} \\
& =2 \bigg({\sin(kr)\over r}+\cos(kr) - \frac{\cos(kr)}{k}+{\sin(kr)\over k^2r})\bigg)e^{-kr^2/2}.
\end{align*}
Note that the first two terms in that expression equal $\partial_t u(x,0)$. % -- actually the first four terms equal that, since the third and fourth cancel.
To estimate the data we use the standard energy norm $||(u, \partial_t u)||_E^2=\int_{\Bbb R^3} |\partial_t u|^2+|\nabla_xu|^2dx$.
In that norm the difference of the initial data satisfies
$$||(u(0),\partial_t u(0))-(u_{GB}(0), \partial_t u_{GB}(0)||_E\sim k^{-7/4}\hbox{, but }||(u(0), \partial_t u(0))||_E\sim k^{-1/4}.$$
So the relative error in the initial data is $O(k^{-3/2})$.

Now we get to the main point: How large is $u(x,t)-u_{GB}(x,t)$? We need to compute
$$
u_{GB}(x,t)={i\over2\pi( 1+it)}\int_{\mathbb{S}^2}e^{ik\phi(x,t;\omega)}d\omega.
$$
Introducing spherical coordinates so that $x\cdot\omega=|x|\cos \rho$ and $d\omega =\sin \rho d\rho d\phi$ with the domain of integration $0\leq \rho\leq\pi$ and $0\leq \phi\leq 2\pi$ and setting $|x|=r$,  this becomes - after substituting $s=\cos \rho$
$$
u_{GB}(x,t)={i\over 1+it}\int_{-1}^1 e^{ik\phi(s)}ds,
$$
where
$$
\phi(s)=[rs-t+tr^2(2+2t^2)^{-1}(1-s^2)] +{i\over 2}[(rs-t)^2+r^2(1+t^2)^{-1}(1-s^2)].
$$
% That integral can be evaluated using the \lq\lq complex error function" (see Wolfram MathWorld), but that does not appear very illuminating.
Note that, for $t>0$,  the real part of the exponent in the integrand is strictly negative unless $s=1$ and $r=t$. Moreover, for $t>0$ and $r$ in a sufficiently small neighborhood of $t$ the maximum of the real part of exponent for $-1\leq s\leq 1$ is assumed at $s=1$. So we can find $u_{\rm GB}(x,t)$, up to terms of order $k^{-1}e^{-k(r-t)^2/2}$, by using the leading term in the integration by parts expansion: Choosing $\rho$ with support near $s=1$ and $\rho(1)=1$,
$$\int_{-1}^1e^{ik\phi(s)}\rho(s)ds=\int_{-1}^1{d\over ds}(e^{ik\phi(s)}){\rho(s)\over ik\phi^\prime(s)}ds ={e^{ik\phi(1)}\over ik\phi^\prime(1)} -\int_{-1}^1e^{ik\phi}{d\over ds}\left({\rho(s)\over ik\phi^\prime(s)}\right)ds.$$
One continues this expansion by repeated integration by parts. In particular, the integral term on the right is $O(k^{-2}e^{-k(r-t)^2/2})$. Since
 $\phi(\pm 1)=\pm r-t +i(r \mp t)^2/2$ and
$$
{1\over \phi^\prime(1)}={1\over r}\left({1+t^2\over 1-it+ (r-t)(-t+it^2)}\right),
$$
%the final answer here appears to be (
hence for $t>\delta>0$ and $r$ close to $t$,
\begin{align*}
u_{GB}(x,t)-u(x,t) & =  {e^{ik\phi(1)}\over k (1+it) \phi^\prime(1)} - \frac{1}{kr} (e^{ik\phi(1)} -e^{ik\phi(-1)}) +O\left({1\over k^2}e^{-k(r-t)^2/2}\right) \\
  & ={1\over kr}\bigg({t(r-t)\over 1+t(t-r)}\bigg)e^{ik(r-t)-k(r-t)^2/2}+O\left({1\over k^2}e^{-k(r-t)^2/2}\right).
\end{align*}
At this point we want to obtain a lower bound on 
$||u_{GB}(\cdot,t)-u(\cdot, t)||_E$. The dominant terms in the first 
derivatives of $u_{GB}(\cdot,t)-u(\cdot, t)$ come from the factor 
$\exp(ik(r-t))$ and bring down a factor of $k$. So, letting $s=r-t$, 
this leaves a dominant term which is a nonvanishing multiple of 
$se^{-ks^2/2}$, and hence has $L^2$ norm bounded below by a multiple of 
$k^{-3/4}$. That implies $||u_{GB}(\cdot, t)-u(\cdot, t)||_{E} \sim k^{-3/4}$.
%{\color{red} That implies $||u_{GB}(\cdot, t)-u(\cdot, t)||_{L^2} \sim k^{-7/4}$. To get the energy norm of it one needs to compute first order derivatives in 
%$x$ and $t$. Since $t$ is  bounded away from $0$ and $r$ is close to $t$, all that can happen is that some derivatives will get an extra factor of $k$. So  we have $||u_{GB}(\cdot, t)-u(\cdot, t)||_{E} \sim k^{-3/4}$.}
%That implies $||u_{GB}(\cdot, t)-u(\cdot, t)||_E \sim k^{-3/4}$. 
However, here the Gaussian beam superposition is missing a factor of $k$ compared to Theorem \ref{1.2}.  Hence this example shows that Theorem \ref{1.2}  is sharp when $d=3$, $m=2$ and $N=1$.
%That implies $||(u_{GB}(t), \partial_t u_{GB}(t))-(u(t), \partial_t u(t))||_E\sim k^{-3/4}$. So the relative error is $\sim k^{-1/2}$.
%As we can see when $t>0$ this approximation quickly degrades down to one with a relative error in energy norms $\sim k^{-1/2}$, the accuracy that always holds for first order beam superpositions. This reflects the greater accuracy of beam approximations at caustics that was observed by Cerveny, Klimes  and Hill.
%Cerveny \cite{CPP82}, Klimes \cite{Kl86} and Hill \cite{Hi90}.

\section{An example for the 3D acoustic wave equation}
%\subsection{Solution formulas}
This will be the construction of a Gaussian beam superposition for the initial value problem
\begin{align}\label{6.0}
\partial_t^2  u-\Delta u=0,\ u(x, 0)=a(|x|)e^{ik|x|},\          \partial_t u(x,0)=0,\ (x,t)\in \Bbb R^3_x\times\Bbb R_t,
\end{align}
where $a(r)=0$ in a neighborhood of $r=0$. From here on $|x|=r$ will be used.

The exact solution to this initial value problem is
$$
u(r,t)={1\over r}(f(t+r)-f(t-r))\hbox{ where }f(s)={sa(s)\over 2}e^{iks}
$$
extended to $\Bbb R$ by $f(s)=-f(-s)$.
Note that for $t>0$
$$
u(0,t)=\hbox{lim}_{r\to 0}{f(t+r)-f(t-r)\over r}=(ikta(t)+a(t)+ta'(t))e^{ikt},
$$
and the solution has a strong peak at $r=0$, when $t$ is in the support of $a$. We want to see the effect of this caustic.

Following the \lq\lq standard procedure" for first order beams, the Gaussian beam superposition will be
\begin{equation}\label{(1)}
u_{GB}(x,t)=\left({k\over 2\pi}\right)^{3/2}\int_{\Bbb R^3}A^+(t; y)e^{ik\Phi^+(x,t; y)}+ A^-(t; y)e^{ik\Phi^-(x,t; y)}dy,
\end{equation}
where $\Phi^+(x,0; y)=\Phi^-(x,0; y)$, $\partial_t \Phi^+(x,0; y)=- \partial_t \Phi^-(x,0; y)$ and $A^+(0; y)=A^-(0; y)$.
So we have two families of Gaussian beams
$$v^\pm(x,t; y)=A^\pm(t; y)e^{ik\Phi^\pm(x,t; y)},$$
where both phases $\Phi^\pm$ are based on the initial phase $S(x)=|x|$, but the $v^\pm$ are concentrated on the rays $(x(t),t)=(y\pm ty/|y|,t)$; \red{see e.g., \cite[superposition (3.1)]{LR09}}.

From here on we will often use $y=s\omega$, $|\omega|=1$. Again the standard construction gives
$$\Phi^\pm(x,t; y)=x\cdot \omega\mp t+{1\over 2}(x-(s\pm t)\omega)\cdot M(\pm t; y)(x-(s\pm t)\omega ),$$
where $M(0; y)=(1/|y|)P_{\omega^\perp}+iI$
 and $\partial_t M+MP_{\omega^\perp}M=0$.
Here $I-P_{\omega^\perp}$ is the orthogonal projection on the span of $\omega$. A modest amount of computation shows
$$
M(t; y)=b(t; s)P_{\omega^\perp} +i I,$$
where
$$
b(t; s)={1-it (1+is)\over (s+ist+t)}.
$$
So
$$
\Phi^\pm(x,t; y)=x\cdot \omega\mp t+{b(\pm t,s)\over 2}(|x|^2-(x\cdot \omega)^2)+{i\over 2}(|x|^2-2(s\pm t)x\cdot \omega +(s\pm t)^2).
$$
The amplitudes $A^\pm$ are given by
$$A^\pm (t; s)={a(s)\over 2}(1\pm t(s^{-1}+i))^{-1}.
$$
Since $x$ appears in $v^\pm$ only as $|x|$ and $x\cdot\omega$, we have $ u_{GB}(x,t)=w(r,t)$. This can be seen by integrating 
in spherical coordinates.   Also $\partial_tu_{GB}(x,0)=0$.

Now we need to determine the order of $||u(\cdot,t)-u_{GB}(\cdot,t)||_E$. 
The contributions to $u_{GB}$ from $\int A^+(t; y)\exp(ik\Phi^+(x,t; y))dy$ will be concentrated at $x=(t+s)\omega$, and, since $s\geq 0$ and we consider $t>0$, they will be negligible near $x=0$. Hence we  will omit that term from all formulas from here on.  Let $v={x\cdot\omega\over |x|}=\cos(\theta)$.  While this is undefined at $x=0$, substitution of $v$ for $\theta$ in (\ref{(1)}) leads to an integral in spherical coordinates that is well-behaved as $x\to 0$. Namely
\begin{align}\label{ugbn} \notag 
u_{GB}(x,t)= & 2\pi\left({k\over 2\pi}\right)^{3/2}\int_0^\infty \! A^-(t,s) s^2ds\! \int_{-1}^1dv \exp(ikCrv-ikDr^2v^2) \\
& \times \exp(ik(t+Dr^2)-k(r^2+(s-t)^2)/2),
\end{align}
where $C=1-i(s-t)$ and $D=b(-t,s)/2$.  
Presumably one could evaluate this formula further, but that is a daunting calculation. Instead we offer the numerical results in the next section.

\subsection{Numerical results} %Here we present a numerical validation of the error cancellation in the above example.  
In this section and in the numerical results in Section 6.1 we will use relative norms to estimate errors, i.e. norms scaled by the 
corresponding norm of the beam superposition. \red{In these examples that has the effect of decreasing the power of $k$ in the energy norm by one, and leaving the power unchanged in the $L^2$-norm, but Example 5 in section 7 shows that this does not always happen.}
%In this section and in Section 6 we will use relative norms, norms scaled by the corresponding norm of the beam superposition, in the numerical computations (Sections 5.1 and 6.1).
Since the energy norm of the initial data is of order $k$ in both cases and $m=d$, Theorem  \ref{1.2} predicts a relative error of order $k^{-1/2}$ for first order beams. We will see that the actual error is numerically of order $k^{-1}$ as conjectured in \cite{LRT13}.  

We take $a(s)=4(s-r_0)^4(s-r_1)^4$ for $r_0 \leq s \leq r_1$; $a(s)=0$ otherwise, here $r_0=0.1, \ r_1=1.0$. The evaluation of (\ref{(1)})
%\begin{equation}\label{gbsc}
%u_{GB}(x,t) = \left( \frac{k}{2\pi} \right)^{3/2}\int_{r_0}^{r_1}\int_{0}^{2\pi}\int_{0}^{\pi} \left( A^+(t;s)e^{i k \Phi^+(x,t;s,\theta,\phi)}+A^-(t;s)e^{i k \Phi^-(x,t;s,\theta,\phi)}\right)s^2\sin \phi ds d\theta d\phi,
%\end{equation}
is done using $80\times 80$ meshes of  $[r_0, r_1]\times [-1,1]$ and $5^2=25$ quadrature points in each element using the reduced integral 
(\ref{ugbn}) and its counter part with $t$ replaced by $-t$.  At the focus $x=0$, the results are reported at Table \ref{tab3d005}, in which the errors are calculated by $e_k=|u-u_{GB}|/|u_{GB}|$, and the orders of convergence are obtained by
\begin{equation}\label{eoc}
\text{EOC}=\log_2 \left( \frac{e_k}{e_{2k}}\right).
\end{equation}
\begin{table}[!htbp]\tabcolsep0.03in
\caption{3D Gaussian beam single point errors and orders of convergence.}
\begin{tabular}[c]{||c|c|c|c|c|c|c|c||}
\hline
  \multirow{2}{*}{$t$} & k=320 & \multicolumn{2}{|c|}{k=640} & \multicolumn{2}{|c|}{k=1280} & \multicolumn{2}{|c||}{k=2560}  \\
\cline{2-8}
 & error & error & order & error & order & error & order\\
\hline
 0.4 & 0.109724 & 0.064004 & 0.78 & 0.0347248 & 0.88 & 0.0177462 & 0.97  \\
%\cline{2-9}
 0.55 & 0.0820207 & 0.0420894 & 0.96 & 0.0213659 & 0.98 & 0.0118253 & 0.85  \\
%\cline{2-9}
 0.7 & 0.0822853 & 0.0418797 & 0.97 & 0.0211195 & 0.99 & 0.0102407 & 1.04  \\
\hline
\end{tabular}\label{tab3d005}
\end{table}
We also test the energy errors and orders of convergence at some $t$ in $(0, 1)$. The error in energy norm is calculated by
$
e_k = {\|u-u_k\|_E}/{\|u_k\|_E  },
$
with $ \|v\|_E^2 = \frac{1}{2}\int_{\mathbb{R}^3} |u_t|^2 + |\nabla_x u|^2 dx$, evaluated over the ball of radius $r_1+t$.
The errors and orders of convergence using  (\ref{eoc}) are reported in Table \ref{tab3deng}.
\begin{table}[!htbp]\tabcolsep0.03in
\caption{3D Gaussian beam energy errors and orders of convergence.}
\begin{tabular}[c]{||c|c|c|c|c|c|c|c||}
\hline
\multirow{2}{*}{$t$} & k=320 & \multicolumn{2}{|c|}{k=640} & \multicolumn{2}{|c|}{k=1280} & \multicolumn{2}{|c||}{k=2560}  \\
\cline{2-8}
 & error & error & order & error & order & error & order\\
\hline
 0.4 & 0.111507 & 0.0671302 & 0.73 & 0.0354353 & 0.92 & 0.0185048 & 0.94  \\
 0.5 & 0.0716308 & 0.0388652 & 0.88 & 0.0193636 & 1.01 & 0.00994688 & 0.96  \\
 0.55 & 0.0825064 & 0.0429692 & 0.94 & 0.0213441 & 1.01 & 0.0108103 & 0.98  \\
 0.7 & 0.0834459 & 0.0427814 & 0.96 & 0.0211234 & 1.02 & 0.0106206 & 0.99  \\
 0.8 & 0.0458945 & 0.0242241 & 0.92 & 0.0100285 & 1.27 & 0.0053213 & 0.91  \\
\hline
\end{tabular}\label{tab3deng}
\end{table}
The numerical results with the gain in the order of convergence agree with the above asymptotic estimate.  

%However,  the mechanism that leads to the gain in decay of the error  in this example (compared to \cite{LRT13}) is not obvious. It may not be a consequence of the beam being first order. It looks more like it comes from the spherical symmetry of the solution.

The mechanism that leads to a relative error of order $k^{-1}$ in this example is probably the cancelation of terms of order $k^{-1/2}$ in the Gaussian beam superposition in (\ref{ugbn}). This is the result of the spherical symmetry in this superposition.

\section{A 2D example with fold caustics}
This section is devoted to the construction of a Gaussian beam superposition with fold caustics for the 2D acoustic wave equation 
\begin{align}\label{2de}
\square_{x,t} u:=\partial_t^2  u-\Delta u=0.
\end{align}
Let us  consider a gaussian beam superposition with ray paths given by 
 $$
 (x_1(t;\theta,s),x_2(t;\theta,s)), 
$$
 where 
\begin{align*}
& x_1(t;\theta,s)=\sqrt 2 \cos(\theta +\pi/4)+(t+s)\sin(\theta), \\ % =cos(\theta) +(t+s-1)sin(\theta), \\
& x_2(t;\theta,s)=\sqrt 2\sin(\theta+\pi/4)-(t+s)\cos(\theta). % = sin(\theta)+(1-t-s)cos(\theta).
\end{align*}
These rays are tangent to the unit circle at $(x_1,x_2)=(\cos\theta, \sin\theta)$ 
and propagating in the direction of the tangent $(\sin\theta,-\cos\theta)$. That defines the parameter $\theta$. The parameter $s$ is distance along the ray path, chosen so that $s=0$ on $x_1^2+x_2^2=2$ and $s=1$ on $x_1^2+x_2^2=1$.  More precisely the relation between $r$ and $s$ is (by the Pythagorean Theorem)
$$1+(s-1)^2=r^2$$
or  $s=1- \sqrt{r^2-1}$ for  $s< 1$ and $s=1+ \sqrt{r^2-1}$  for $s>1$. 
% If we set $r^2=x_1^2+x_2^2$, then the corresponding map between $r$ and $s$ can be chosen be 
%$$
%s=2+\sqrt{2} -(\sqrt{2}+1)r \; \text{or}\; 2-r^2.
%$$
The phase function associated with these ray paths, which was complicated in euclidian coordinates, is quite simple in $(\theta,s)$. 
It can be chosen as 
$$
S(x_1(0;\theta,s),x_2(0;\theta,s))=-\theta+s,
$$  
defined for $0\leq s<1$, and $-\pi<\theta<\pi$. The function $\exp(ikS(x_1,x_2))$ will be single-valued on the annulus bounded by the circles of radius $1$ and $\sqrt 2$ only when $k$ is an integer. In the numerical examples we will take $k$ to be an integer.
%That may not be a problem for us, but if it is we can restrict the initial amplitude to have support in 
%$\{0\leq s<1\}\times \{-\pi<\theta<\pi\}$.

The Hessian of $S(x_1,x_2)$ has to be a multiple of the orthogonal projection $P^\perp$ onto $(\cos\theta,\sin\theta)$, the vector perpendicular to the ray path. %By a careful calculation one can get
We find that at $(x_1(0;\theta,s),x_2(0;\theta,s))$, 
\begin{align*}
\left(\begin{array}{cc}
\partial^2_{x_1x_1} S & \partial^2_{x_1x_2}S\\ 
\partial^2_{x_1x_2}S&\partial^2_{x_2x_2}S 
\end{array}\right)
=\frac{1}{s-1}\left(
\begin{array}{cc}
\cos^2\theta  & \cos\theta\sin\theta  \\
\sin\theta\cos\theta  &      \sin^2\theta
\end{array}
\right)
=\frac{1}{s-1}P^\perp(\theta).
\end{align*}
The Hessian of the phase in the Gaussian beam, $M(t;\theta,s)$, has to be given by 
$$
M=a(t,s)P(\theta)+b(t,s)P^\perp(\theta),
$$ 
where $P=I-P^\perp$, $\partial_t a=0$, $\partial_t b+b^2=0$, and $(a(0,s),b(0,s))=(i,i+1/(s-1))$. 
So the Hessian of the phase is a lot like the Hessian in the 3D example. 
In fact, we have
$$
b(t,s)={b(0,s)\over 1+tb(0,s)}={i+1/(s-1)\over 1+it+t/(s-1)}={1+i(s-1)\over t+s-1+it(s-1)}.
$$
%but I think there are methods using the fast Fourier transform to approximate the exact solution. I think Nick used that. You and Peimeng know that stuff much better than I do.
%I hope that we can use this example to find out what happens at fold caustics. It is possible that the relatively poor approximation by  gaussian beam superposition could arise from beam spreading into the region $x_1^2+x_2^2<1$ where the true solution is very small at high frequencies. I definitely do not know that, but Nick's results indicate that it might happen.
%\vskip.2in
%\noindent {\bf Part Two, 5/14/2018}
%\vskip.2in
The next step in the construction would be to find the amplitude, but for that one needs the phase. That is
\begin{align*}
\phi(x,t;\theta,s)& =-\theta +s+(x-x(t;\theta,s))\cdot(\sin\theta,-\cos\theta)\\
& \qquad +{1\over2}(x-x(t;\theta,s))\cdot M(t;\theta,s)(x-x(t;\theta,s))\\
& =-\theta -t +1 +x\cdot (\sin\theta,-\cos\theta)+{1\over2}(x-x(t;\theta,s))\cdot M(t;\theta,s)(x-x(t;\theta,s)).
\end{align*}
That comes from formulas (1.5) and (1.6) in \cite{LRT13} 
%{\it Error Estimates for Gaussian Beam Superpositions} 
with one small observation: the function $\phi_0(t;z)$ with $z=(\theta, s)$ does not depend on $t$. You can see that from the fact that since  the Hamitonaian $H$ is homogeneous of degree one in $p$, $\dot x(t;z)\cdot p(t;z)=H(t,x(t;z),p(t;z))$, which forces $\dot \phi_0(t;  z)=0$ (see also equation (3.10c) in \cite{LRT16}).

Continuing, we have $\partial_t\phi =-1$ and $\square_{x,t} \phi=-b(t,s)$ when $x=x(t;\theta,s)$. So the solution of the transport equation, $2A_t\phi_t+(\square_{x,t} \phi)A=0$ is just 
$$
A(t;\theta,s)=A(0;\theta,s)(1+tb(0;s))^{-1/2}.
$$ 
%The amplitude equation for $A$ based on formula (2.9) in page 431 of [LR2009 MULTISCALE MODEL. SIMULVol. 8, No. 2, pp. 428Ð444] reduces to 
%$$
%\frac{d}{dt}A=-\frac{b}{2}A.
%$$ 
%Its solution is 
%$$
%A(t;\theta,s)=A(0;\theta,s)(1+tb(0;s))^{-1/2}.
%$$
So the complete Gaussian beam superposition will be
\begin{align}\label{ugb}
u_{GB}(x,t)={k\over 2\pi}\int_0^1ds\int_0^{2\pi}A(t;\theta,s)e^{ik\phi(x,t;\theta,s)}(1-s)d\theta,
\end{align}
where $1-s$ is the absolute value of the Jacobian of $(x_1(0;\theta,s),x_2(0,\theta,s))$ with respect to $(\theta,s)$.
\red{The  contributions from beams built with  the other choice,  $\partial_t\phi=1$,  propagate away from the disk $\{|x|\leq 1\}$ as $t$ 
increases, and are negligible near the caustics on the circle. Hence we  have omitted  those contributions from all formulas and numerical 
results below.}
% \red{
%The contributions from another wave mode will be propagating into outside region when $t>0$, they will be negligible near the caustics on the circle. Hence we have omitted that term from all formulas and numerical results below.}
%It is possible that the relatively poor approximation by  Gaussian beam superposition could arise from beam spreading into the region $x_1^2+x_2^2<1$,  where the true solution is very small at high frequencies. 
In the next  section we will examine  the accuracy of the method numerically.
%We  will numerically examine this and the method accuracy in the next section.

\subsection{Numerical results} %This should be everything we need to build a gaussian beam superposition for this problem.  
%We take the initial data for the superposition to be the data of the  geometric optics approximation (valid for $t$ near 0), $u_{GO}(x, t)= A(x,t)e^{ik(S(x)+t)}$, where $A(x,0)$ is supported in $1<|x|^2<2$. Then we compare the superposition with the exact solution that has the same initial data as the superposition, i.e., instead of taking $\partial_t u(x, 0)=0$ as in the 3D example, we just take $\partial_t  u(x, 0)$ equal to the time derivative of the superposition at $t=0$.  Since this is in two space dimensions, even with radial data above, the formula for the exact solution is not as simple as in 3D (See Courant-Hilbert, Vol II, 682-691), we shall use the fast Fourier transform to approximate the exact solution.
 %and further examine the accuracy of the  gaussian beam superposition at  fold caustics. 
 %Also we take $A(0;\theta,s)$ independent of $\theta$ and $k\in\Bbb N$.

In addition to estimates of accuracy in the energy norm,  we will also give  numerical estimates in the maximum norm.  \red{ The results in \cite{LRT16}  restricted to first order beams with $O(1)$ initial data show  that  $||u_{GB}(t)-u(t)||_{L^\infty}\leq Ck^{-1}$ away from caustics, see \cite[estimate (6.1)]{LRT16}. For domains including caustics \cite{LRT16} gives the  weaker estimate  $||u_{GB}(t)-u(t)||_{L^\infty}\leq k^{1/2}$, and these estimates 
hold in relative norms well.}
%When passing to relative norms the corresponding estimates become $O(k^{-1})$ and $ k^{(d-3)/2}$, respectively.  
%Note that the maximum norm of the initial data is $O(1)$ in \cite{LRT16}, \blue{it can become $O(k^\alpha), 0<\alpha<1$}.
%$\footnote{A refined numerical test shows that here $\alpha$ varies in the range of $(0, 1/3)$ at different times.}} 
%when the wave develops caustics. 
\red{Our numerical results in Table 3 below show that at caustics the order of error is 
$$\|u-u_k\|_{L^\infty}\leq Ck^{-\alpha(t) }\|u(\cdot, 0)\|_{L^\infty}
$$ 
with $\alpha(t)$ varying in $(0.5, 1)$. We see that the numerical order of error near caustics is greater than the error away from caustics but much smaller than the bound in \cite{LRT16}}. %  in domains containing caustics, but it does degenerate in the sense that it is   }  
%So the bottom line in 
%Table 3 on page 18 seems to be  that the error we find at caustics is 
%greater than the error away from caustics but much smaller than the 
%bound in [11].
%not uniform in time, 
%but stronger than 
%the order of $k^{-1/2}$ obtained in \cite{LRT16} for $d=2$.  
%, it is about $k^{-\alpha }$ for $0.5 \leq \alpha \leq1$ ($d=2, m=2$). 

We consider the  2D acoustic wave equation (\ref{2de}) on $[0, T] \times \Omega$, where $\Omega=[-L/2,L/2]^2$ with $L=4$, subject 
to initial data $(u, \partial_t u)(x, 0)=(u_{GB}, \partial_t u_{GB})(x, 0)$ and  periodic boundary conditions. For the Gaussian beam superposition (\ref{ugb}) we take initial amplitude
\begin{equation*}%\label{BeamSys1DNon}
A(0; \theta, s)=
\left \{
\begin{array}{rl}
    (s-s_0)^2(s-s_1)^2, & \quad s_0 \leq s \leq s_1,\\
    0, & \quad \text{otherwise},
\end{array}
\right.
\end{equation*}
which is supported on $1+(1-s_1)^2 \leq x_1^2 +x_2^2 \leq 1+(1-s_0)^2$ for  $s_0, s_1 \in (0, 1)$, since  $1+(s-1)^2=r^2$.

We use the fast Fourier transform to approximate the ``exact solution", and use it to determine the errors in the Gaussian beam superposition. 
For $K$ large enough, say $K=1024$, we partition $\Omega$ by a uniform rectangular mesh $\Omega=\left[-L/2:h:L/2-h\right]^2$, with $h=L/K$.
 We obtain the ``exact solution" and its derivatives numerically using Matlab 2018a in the following steps.
\begin{itemize}
  \item Step 1 (Initial preparation) We calculate the integrals in the Gaussian beam superposition $u_{GB}(x, t)$; more specifically $u_{GB}(x,0), (u_{GB})_t(x, 0)$ using \texttt{integral2} with absolute tolerance $10^{-8}$. 
  %Due to the independence of integration evaluation at points $(x_1,x_2)$, a parallel computing are applied to speed up the computation.
  \item Step 2 (fast Fourier transform) The Fourier transform of (\ref{2de}) gives 
  \begin{equation}\label{wavefft}
\begin{aligned}
\partial_t^2 \widehat{u} = & i^2 (\kappa_1^2+\kappa_2^2)\widehat{u},\\
\widehat{u}(0) = & \widehat{u}_{GB}(\kappa_1,\kappa_2, 0), \\
\partial_t \widehat{u}(0) = & (\partial_t\widehat{u_{GB}})(\kappa_1,\kappa_2, 0), 
\end{aligned}
\end{equation}
where $(\kappa_1, \kappa_2) \in \left[\frac{2\pi}{L}(0,1, \cdots, K/2-1, -K/2, -K/2+1, \cdots, -1)\right]^2$ is adopted in the fast Fourier transform 
(\texttt{fft2}) in Matlab. 
  \item Step 3 (Solving ODE) The exact solution of (\ref{wavefft}) is determined  by
\begin{equation*}%\label{BeamSys1DNon}
\widehat{u} =
\left \{
\begin{array}{rl}
    \partial_t \widehat{u}(0) t + \widehat{u}(0), & \quad \text{if } \kappa_1=\kappa_2 =0, \\
    \widehat{u}(0)\cos( \sqrt{\kappa_1^2+\kappa_2^2}t ) +    \frac{\partial_t \widehat{u}(0)}{\sqrt{\kappa_1^2+\kappa_2^2} } \sin( \sqrt{\kappa_1^2+\kappa_2^2}t ), & \quad \text{otherwise}.
\end{array}
\right.
\end{equation*}
  \item Step 4 (inverse fast Fourier transform) We obtain the ``exact solution" $u$ and its derivatives $\partial_t u, \partial_{x_1}u, \partial_{x_2}u$ through the inverse fast Fourier transform (\texttt{ifft2} in Matlab) applied to $\widehat{u}, \partial_t \widehat{u}, i\kappa_1\widehat{u}, i\kappa_2\widehat{u}$, respectively.
\end{itemize}

We test the case $s_0=0.25, \ s_1=0.75$.  With this choice, the wave propagates within the entire computational domain $\Omega$ for $t\leq T=0.8$, and caustics appear only for $0.25=1-s_1<t<1-s_0=0.75$.  

\red{In this example, $u(x, 0)=u_k(x, 0)$.
% while $\|u_k(\cdot, t)\|_{L^\infty}$ is not preserved in time. 
A refined numerical test indicates that 
$$
 \|u_k(\cdot, t)\|_{L^\infty} \sim k^{\beta(t)} \|u(\cdot, 0)\|_{L^\infty},
$$ 
where  the rate $\beta(t)$, shown in Figure \ref{GBRate}, is calculated over $N\times N$ meshes with $N=2.5 \times 10^5$, %using $\log_2 \left( \frac{\|u_{2k}(\cdot, t)\|_{L^\infty}}{\|u_{k}(\cdot, t)\|_{L^\infty}}\right)$
and of frequencies $k=40960$ and $k=81920$.  %based on Gaussian beam solutions of two frequencies $k=900$ and $k=1800$. 
From this figure, we see that $\beta(t) \sim 0$ when away from caustics, but $\beta(t)$ can go up to about $1/6$ in the presence of caustics.}
\begin{figure}
	\centering
	\subfigure{\includegraphics[width=0.49\textwidth]{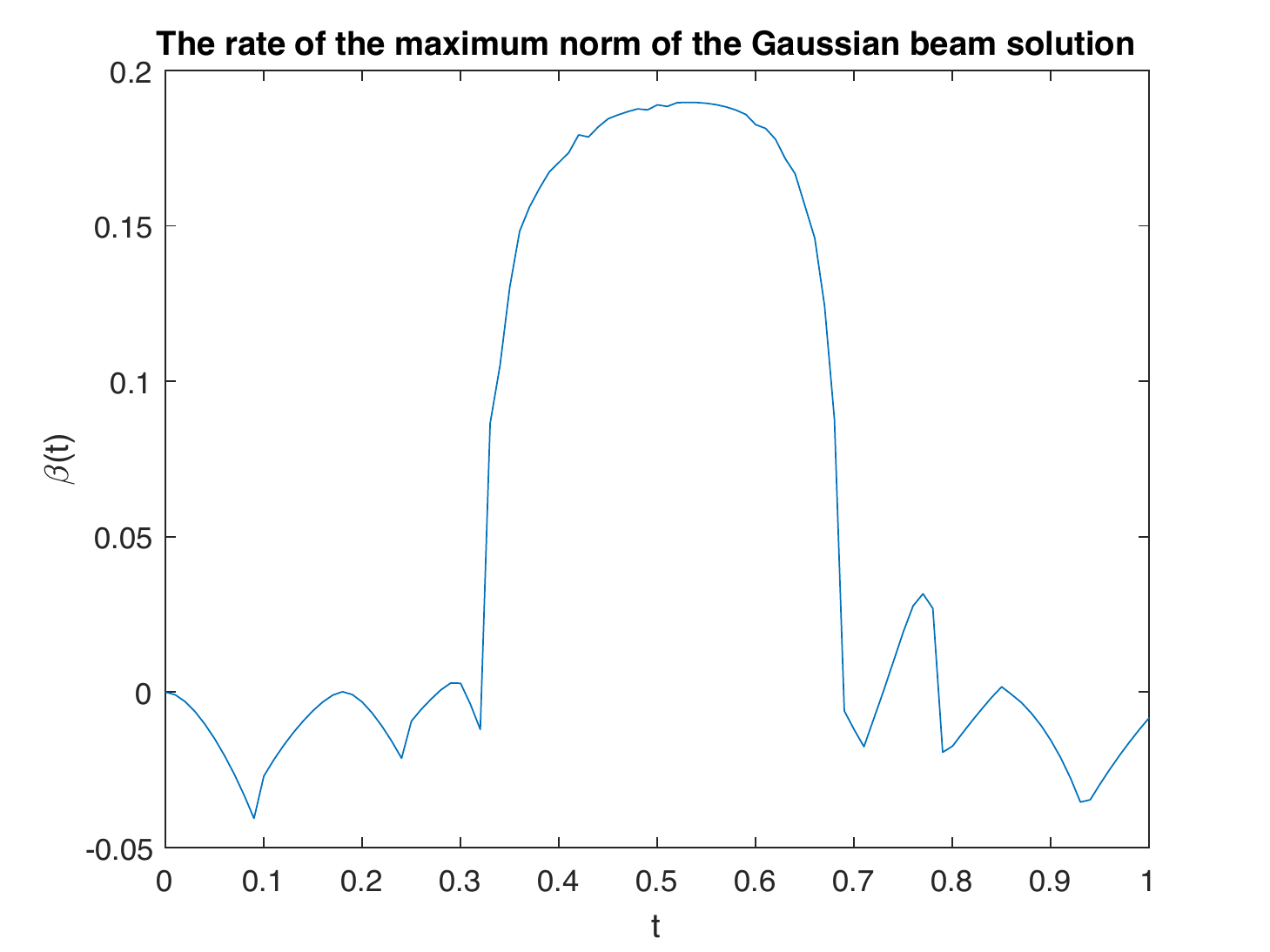}}
	\caption{The rate of $\|u_k(\cdot,t)\|_{L^\infty} / \|u(\cdot,0)\|_{L^\infty}$ of the Gaussian beam solution.}
    \label{GBRate}
\end{figure}
% \red{Let $t^*$ be the time when $\|u_{GB}(\cdot, t)\|_\infty$ reaches its first maximum (or peak)}, 
%we see that  $t^*=0.44$, $0.46$ for $k=160$, $k=320$, respectively, as shown in Figure \ref{modulus}.  
%\begin{figure}
%	\centering
%	\subfigure[]{\includegraphics[width=0.49\textwidth]{figure/modulus160.eps}}
%	\subfigure[]{\includegraphics[width=0.49\textwidth]{figure/modulus320.eps}}
	% \subfigure[]{\includegraphics[width=0.49\textwidth]{figure/modulus12800.eps}}
%	\caption{$|u_{GB}(x_1, 0, t)|$.
%	} \label{modulus}
%\end{figure}
%The Gaussian beam solution $u$ at $t=0.25$ in module value is shown in Figure \ref{k20}, from which we see that the wave has propagated into the unit circle, so we can check the order of accuracy at single point on the caustics. 
The experimental orders of convergence (EOC) are obtained by
\begin{equation}\label{eoc+}
\text{EOC}=\log_2 \left( \frac{e_k}{e_{2k}}\right),
\end{equation}
where $e_k$ is the relative error between the ``exact solution" $u(x, t)$ and $u_k: =u_{GB}$.  \\

\noindent\textbf{Test case 1. Convergence in $L^\infty$ norm.} \\
We first check the $L^\infty$ errors and orders of convergence from $t=0.15$ to  $t=0.8$.  The error in $L^\infty$ norm is approximated by
$$
e_k = \frac{\|u(\cdot, t)-u_k(\cdot, t)\|_{L^\infty}}{\|u(\cdot, 0)\|_{L^\infty}  }, \quad \|v\|_{L^\infty} = \max_{(x,y)\in \Omega}(|v|).
$$
From the errors and orders of convergence reported in Table \ref{tab2dlinf} obtained using $1024 \times 1024$ meshes, \red{we find that the orders of accuracy are decreased in the present of fold caustics.}  \red{At $t=0.15$ and $0.80$, the orders of accuracy are increased when $k$ increases to $320$, much closer to the desired first order since caustics are not present.
% not formed. When $k$ is further doubled, the orders of accuracy are decreased though 
%all errors are getting smaller. One possible reason for this odd phenomena is that the reference solution is no longer accurate enough for 
%computing the errors.  
} \\

% $1$st order of accuracy in $L^\infty$ norm is obtained for $t>t^*$ despite of the appearance of caustics. But the orders of accuracy appear damaged for $1-s_1<t<t^*$. 
\begin{table}[!htbp]\tabcolsep0.03in
\caption{$L^\infty$ errors and orders of convergence of 2D Gaussian beam superposition.}
\begin{tabular}[c]{||c|c|c|c|c|c||} 
%\\c|c||}
\hline
\multirow{2}{*}{$t$} & k=80 & \multicolumn{2}{|c|}{k=160} & \multicolumn{2}{|c|}{k=320} \\
%& \multicolumn{2}{|c||}{k=640} \\\cline{2-8}
\cline{2-6}
 & error & error & order & error & order \\ 
 %& error & order  \\
\hline
0.15 & 0.0134258 & 0.00775526 & 0.79 & 0.00390146 & 0.99 \\
%& 0.00211839 & 0.88 \\
%\hline
%0.20 & 0.016 & 0.0095 & 0.75 & 0.0051 & 0.90  \\
%\hline
%0.25 & 0.0504 & 0.032 & 0.66 & 0.0207 & 0.63 & 0.0133 & 0.64 & 0.0079 & 0.75  \\
\hline
0.30 & 0.0349802 & 0.0205824 & 0.77 & 0.0119712 & 0.78 \\%& 0.00764186 & 0.65  \\
\hline
%0.38 & 0.0251898 & 0.0150155 & 0.75 & 0.00821606 & 0.87 &  0.00377825 & 1.12  \\
%\hline
0.40 & 0.0439910 & 0.0253134 & 0.80 & 0.0155627 & 0.70 \\ %& 0.00855024 & 0.86 \\
\hline
0.42 & 0.0469892 & 0.0241403 & 0.96 & 0.0155593 & 0.63 \\% & 0.00937597 & 0.73\\
\hline
0.45 & 0.0514442 & 0.0258481 & 0.99 & 0.0147372 & 0.81 \\% & 0.0102223  & 0.53 \\
%\hline
%0.47 & 0.0332943 & 0.0158342 & 1.07 & 0.00824756 & 0.94 & 0.00469378 & 0.81\\
\hline
0.50 & 0.0578684 & 0.0296832 & 0.96 & 0.0161166 & 0.88 \\% & 0.0104300 & 0.63 \\
\hline
0.60 & 0.0694715 & 0.0371298 & 0.90 & 0.0184097 & 1.01 \\% & 0.0102545 & 0.84 \\
\hline
0.70 & 0.0795142 & 0.0437080 & 0.86 & 0.0229725 & 0.93 \\ %& 0.0134238 & 0.78 \\
\hline
0.80 & 0.0878869 & 0.0485395 & 0.86 & 0.0254761 & 0.93 \\% &  0.0139284 & 0.87 \\
\hline
\end{tabular}\label{tab2dlinf}
\end{table}

\noindent\textbf{Test case 2. Convergence in energy norm.} We next check the energy errors and orders of convergence from $t=0.15$ to $0.8$. 
The error in energy norm is approximated by
$$
e_k = \frac{\|u-u_k\|_E}{\|u_k\|_E  }, \quad \|v\|_E^2 := \frac{1}{2}\int_{\Omega} |u_t|^2 + |\nabla_x u|^2 dx.
$$
The results in Table \ref{tab2deng} show that $1$st order of accuracy in energy norm is obtained regardless of the appearance of caustics. \red{Note that in contrast to the maximum norm, for energy norm $\|u_k(\cdot, t)\|_E=\|u(\cdot, 0)\|_E$. } 
\begin{table}[!htbp]\tabcolsep0.03in
\caption{Energy errors and orders of convergence of 2D Gaussian beam superposition.}
\begin{tabular}[c]{||c|c|c|c|c|c||}
\hline
\multirow{2}{*}{$t$} & k=80 & \multicolumn{2}{|c|}{k=160} & \multicolumn{2}{|c||}{k=320}  \\
\cline{2-6}
 & error & error & order & error & order \\
\hline
 0.15 & 0.0138 & 0.0071 & 0.96 & 0.0034 & 1.06  \\
%\hline
%0.20 & 0.0183 & 0.0094 & 0.96 & 0.0045 & 1.06  \\
%\hline
%0.25 & 0.0806 & 0.0459 & 0.81 & 0.026 & 0.82 & 0.0144 & 0.85 & 0.0075 & 0.94  \\
\hline
0.30 & 0.0274 & 0.0141 & 0.96 & 0.0068 & 1.05  \\
%\hline
%0.35 & 0.0319 & 0.0164 & 0.96 & 0.0079 & 1.05  \\
\hline
0.40 & 0.0364 & 0.0187 & 0.96 & 0.009 & 1.06  \\
\hline
0.42 & 0.0382 & 0.0196 & 0.96 & 0.0094 & 1.06  \\
\hline
0.45 & 0.0408 & 0.021 & 0.96 & 0.0101 & 1.06  \\
\hline
0.50 & 0.0452 & 0.0233 & 0.96 & 0.0112 & 1.06  \\
\hline
0.60 & 0.054  & 0.0278 & 0.96 & 0.0134 & 1.05  \\
\hline
0.70 & 0.0626 & 0.0323 & 0.95 & 0.0156 & 1.05  \\
\hline
0.80 & 0.071  & 0.0367 & 0.95 & 0.0177 & 1.05  \\
\hline
\end{tabular}\label{tab2deng}
\end{table}

The gain in order of accuracy in the energy norm indicates the contribution from cancellations of first order beams, this is consistent with the numerical evidence  in \cite{LRT13}.  However, the order of accuracy in $L^\infty$ norm can vary in time due to the presence of caustics; 
%in some cases the order of accuracy in $L^\infty$ norm is less than one due to the presence of caustics, while in other cases the first order of accuracy is %observed. 
while when away from caustics the uniform first order of accuracy in  $L^\infty$ norm has been proven in  \cite{LRT16}.

\section{Examples of general superpositions}
In this section we discuss the growth rate in $k$ of general superpositions,
$$
u_{GB}(x)=k^{m/2} \int_{K_0} v(x; X_0)dX_0,
%a(x) e^{i k x\cdot p(z)-\frac{k^2}{2}|x-x(z)|^2}dz,
$$
when measured in the energy norm.  This will depend on the detailed description of $K_0$, and we discuss by examples. \red{To simplify presentation, we only estimate the $L^2$-norm of $\nabla _x u$ in all examples, instead of computing the whole energy norm. For beams the $L^2$-norm of the spatial gradient is always comparable to the $L^2$ norm of the initial time derivative.
}

Let $K_0$ be parameterized  by $z \in \Sigma$ so that
$$
K_0=\{(x, p)|\quad x=x(z), p=p(z), \quad  z\in \Sigma \subset  \mathbb{R}^m\}.
$$
Here listed are some typical  examples. \\

\noindent{\bf Example 1. } If the data is concentrated at one point (say, in the case of a point source for stationary problems), one may
consider
$$
K_0=\{X=(x, p)| \quad x(z)=0, \quad p(z)=z \in \mathbb{S}^{d-1}\}, \quad m=d-1.
$$
In the example presented in \S 4, we have
$$
u_{GB}(x,0)={ ki\over 2\pi} \int_{\mathbb{S}^2}\exp(ikx\cdot\omega-k|x|^2/2) d\omega.
$$
This corresponds to $d=3$ and $m=2$ with
$$
K_0=\{(0, \omega), \quad \omega \in \mathbb{S}^2\}.
$$
The asymptotic rate of its energy norm  is
$$
||u_{GB}(\cdot, 0)||_E \sim k^{3/4}=k^{1-\frac{d-m}{4}}.
$$
%\noindent{\bf Example 2. }
We may also consider the case  $m=d$ with
$$
K_0=\{(x, p), \quad x=0, \quad  p=z \in \mathbb{R}^d\}.
$$
Let $a(p)$ be a smooth function compactly supported in $p$, and
$$
u_{GB}(x,0)=  \frac{k^{m/2}}{(2\pi)^{d/2}}\int_{\mathbb{R}^d}a(p) \exp(ikx\cdot p - k|x|^2/2) dp.
$$
Hence $u_{GB}(x, 0)=k^{m/2} \hat a(kx)e^{-k|x|^2/2}$, and
\begin{align*}
\|\partial_x u_{GB}(\cdot, 0)\|^2_{L^2} & \sim k^m \int_{\mathbb{R}^d_x} k^2 \sum_{j=1}^d |\partial_{y_j} \hat a(kx)-x_j \hat a(kx)|^2e^{-k|x|^2}dx \\
& \sim k^{2+m}\int_{\mathbb{R}^d_y} \sum_{j=1}^d |\partial_{y_j} \hat a(y)-y_j \hat a(y)/k|^2e^{-|y|^2/k}dy\sim k^{2 +m-d}.
\end{align*}
This implies  $\|u_{GB}(\cdot, 0)\|_E\sim k^{1-\frac{d-m}{4}}$.  This together with the result in Theorem \ref{ee} says that the relative error is no greater than $k^{-N/2}$, as we expected.
\\
%We may take
%$$
%K_0=\{(0, p)|\quad p\in \mathbb{R}^d\},
%$$
%and $\phi=x\cdot p +\frac{i}{2}|x|^2$.

\noindent{\bf Example 2.  }
A more general example of a superposition.  \\
Let $z=(z^{(1)},z^{(2)})$ where $z^{(1)}=(z_1,...,z_r)\in \Bbb R^r$ and $z^{(2)}=(z_{r+1},..,z_m)\in \Bbb R^{m-r}$. Consider the superposition of Gaussian beams in $\Bbb R^d$
$$u_{GB}(x)=k^{m/2}\int_{\Bbb R^m}a(z)e^{ikx^{(1)}\cdot z^{(1)}-(k/2)(|x^{(1)}|^2+|x^{(2)}-z^{(2)}|^2+|x^{(3)}|^2)}dz.$$
Here $x^{(3)}=(x_{m+1},...,x_d)\in \Bbb R^{d-m}$.
We will take $a(z)=e^{-|z|^2/2}$ to make some computations explicit. So $a(z)$ {\it nearly} has compact support.  We have
$$u_{GB}(x,0)=k^{m/2}(2\pi)^{r/2} e^{-k(|x^{(1)}|^2 +|x^{(3)}|^2)  /2-k^2|x^{(1)}|^2/2}\int_{\Bbb R^{m-r}}e^{-|z^{(2)}|^2/2-k|x^{(2)}-z^{(2)}|^2/2}dz^{(2)}.$$
Since
\begin{align*}
|z^{(2)}|^2+k|x^{(2)}-z^{(2)}|^2 & = (1+k)|z^{(2)}|^2-2kz^{(2)}\cdot x^{(2)}+ k|x^{(2)}|^2 \\
& = |(1+k)^{1/2}z^{(2)}- k(1+k)^{-1/2}x^{(2)}|^2+k(1+k)^{-1}|x^{(2)}|^2,
\end{align*}
then
$$
u_{GB}(x,0)=k^{m/2}(2\pi)^{m/2}(1+k)^{(r-m)/2} e^{-k(|x^{(1)}|^2+  |x^{(3)}|^2) /2-k^2|x^{(1)}|^2/2-k(1+k)^{-1}|x^{(2)}|^2/2}.
$$
We have
$$
\nabla u_{GB}(x,0)=-(k(1+k)x^{(1)},k(1+k)^{-1}x^{(2)}, kx^{(3)})u_{GB}(x,0).
$$
This  gives
\begin{align*}
k^{-m} ||\nabla u_{GB}(\cdot,0)||^2_{L^2}
& = c_1k^2 (1+k)^{2+r-m} (k(1+k))^{-1-r/2}(k/(1+k))^{-(m-r)/2}k^{-(d-m)/2} \\
& \qquad +c_2k^2 (1+k)^{-2+r-m}(k(1+k))^{-r/2}(k/(1+k))^{-1-(m-r)/2}k^{-(d-m)/2} \\
& \qquad +c_3k^2 (1+k)^{r-m}(k(1+k))^{-r/2}(k/(1+k))^{-(m-r)/2}k^{-1-(d-m)/2}\\
&= c_1k^{1-d/2}(1+k)^{1-m/2} +c_2k^{1-d/2}(1+k)^{-1-m/2} +c_3k^{1-d/2}(1+k)^{-m/2} .
\end{align*}
where $c_1$, $c_2$ and $c_3$ are powers of $2\pi$. The first term in that expression dominates, and we have
$$||\nabla u_{GB}(\cdot,0)||^2_{L^2}\sim k^{2-(d-m)/2} \quad \hbox{ or }\quad ||\nabla u_{GB}(\cdot,0)||_{L^2}\sim k^{1-(d-m)/4}.
$$
Assuming that the $L^2$-norm of $\partial_t u_{GB}(x,0)$ is of the same order, we can compare that with
$||u(\cdot,t)-u_{GB}(\cdot,t)||_E$ for which we have the estimate (for $|t|<T$)
$$||u(\cdot,t)-u_{GB}(\cdot,t)||_E\leq Ck^{1/2-(d-m)/4}$$
for first order beams, and get the relative error estimate
$$
||u(\cdot,t)-u_{GB}(\cdot,t)||_E/||u_{GB}(\cdot,0;k)||_E\leq k^{-1/2}.
$$
This shows what can happen when initial data is not of form (\ref{id}).

%is consistent with our expectation  that the relative error is never greater than $Ck^{-1/2}$, and that is the sharp bound.

\noindent{\bf Example 3. }
For wave equation (\ref{pp}) subject to the WKB initial data,
$$
(u(x, 0), \partial_t u(x, 0))=(A_0(x, k), B_0(x, k))e^{ikS_0(x)},
$$
compactly supported in $\Omega \subset \mathbb{R}^d$, one may consider $m=d$ with
$$
K_0=\{(x, p), \quad  x \in \Omega:= {\rm supp}(A_0)\cup {\rm supp}(B_0),  \;p=\nabla_x S_0(x)\}.
$$
The superposition of the first order Gaussian beam is given by
$$
u_{GB}(x, 0)= \frac{k^{m/2}}{2}\int_{\Omega} A_0(x_0)e^{ik\phi(x, 0;x_0)}dx_0,
$$
where
$$
\phi(x, 0; x_0)=S_0(x_0)+ p_0\cdot (x-x_0) + \frac{1}{2} (x-x_0)\cdot M_0 (x-x_0),
$$
with $p_0=\nabla_x S_0(x_0)$ and $M_0=\partial_x^2 S_0 (x_0) +iI$.
Note that
$$
\partial_x u_{GB}(x, 0) \sim \frac{ik^{1+m/2}}{2}\int_{\Omega} A_0(x_0)(p_0 + M_0(x-x_0))e^{ik\phi(x, 0;x_0)}dx_0.
$$
Hence the energy norm can be estimated as
$$
\|\partial_x u_{GB}(\cdot, 0)\| \lesssim k^{1+m/2} \left\| \int_{\Omega} (1+|x-x_0|)e^{-k|x-x_0|^2/2}dx_0 \right\| \lesssim k^{1-\frac{d-m}{4}}=k.
$$
This upper bound is as expected. \\

\noindent{\bf Example 4. }   For the WKB data $e^{(ik-1)|x|^2/2}$, we consider
$$
u_{GB}(x,0)=k^{d/2} \int_{\Bbb R^d}e^{ik|x|^2/2-(k/2)|x-z|^2}e^{-|z|^2/2}dz.
$$
Note that $|x|^2/2= |z|^2/2 +z\cdot (x-z) +|x-z|^2/2$, this superposition corresponds to the case with $p(z)=z$,  $x(z)=z$,  initial phase $S_0(x)=|x|^2/2$, and initial amplitude $e^{-|x|^2/2}$.   Since
\begin{align*}
|z|^2+k|x-z|^2 & = (1+k)|z|^2- 2kz\cdot x+ k|x|^2 \\
& =(1+k)|z- k(1+k)^{-1}x|^2+k(1+k)^{-1}|x|^2,
\end{align*}
we have
\begin{align*}
u_{GB}(x, 0) & =k^{d/2} e^{-(k/(2k+2))|x|^2 +ik|x|^2/2 }\int_{\Bbb R^d}e^{-(1/2)(1+k)|z- k(1+k)^{-1}x|^2}dz \\
%&= (1+k)^{-d/2} \exp(-k^2/(2k+2))|x|^2+ik |x|^2/2) \int_{\Bbb R^d}e^{-|\xi|^2/2}d\xi\\
& =k^{d/2} \left({2\pi\over k+1}\right)^{d/2}\exp(-(k/(2k+2))|x|^2 +ik|x|^2/2 ).
\end{align*}
This implies
$$||u_{GB}(\cdot,0)||_{L^2}\sim k^{0},$$
and passing to $\nabla u_{GB}(x,0)$ brings down a factor  of order $k$. Hence,
$$
 ||\nabla u_{GB}(\cdot,0)||_{L^2}\sim k^{1}.
$$
We may also consider
$$
u_{GB}(x,0)= k^{d/2} \int_{\Bbb R^d}e^{ik|x|^2/2 - (k/2)|x-z|^2}a(z)dz,
$$
where $a$ is assumed to be smooth with compact support.
This corresponds to the case with $p(z)=z$,  $x(z)=z$,  initial phase $S_0(x)=|x|^2/2$, and initial amplitude $a(x)$, for
$$
|x|^2/2 =|z|^2/2 + z\cdot (x- z) +|x-z|^2/2.
$$
We have
\begin{align*}
u_{GB}(x, 0) & =k^{d/2} e^{ik|x|^2/2 }\int_{\Bbb R^d}e^{-(k/2)|z- x|^2}a(z)dz, \\
\partial_x u_{GB}(x, 0) & = k^{1+d/2} e^{ik|x|^2/2 }\int_{\Bbb R^d} (ix -(x-z))e^{-(k/2)|z- x|^2}a(z)dz.
\end{align*}
This  implies
\begin{align*}
  ||u_{GB}(\cdot,0)||_{L^2} & \lesssim \|a\|_{L^2},  \\
    ||\partial_x u_{GB}(\cdot,0)||_{L^2} & \lesssim k^{1} \|x a\|_{L^2}+ k^{1/2-d/2}\|a\|_{L^2}  \lesssim k^{1} =k^{1-\frac{d-m}{4}}.
\end{align*}
%and passing to $\nabla u_{GB}(x,0)$ brings down factors of $x_j$ multiplied by factors of order $k$. Hence,
%$$
% ||\nabla u_{GB}(\cdot,0)||_{L^2}\sim k^{1/2-d/2}.
%$$

\noindent{\bf Example 5.  } This example is a bit surprising.  \\
Let
$$
u_{GB}(x,0)=k^{d/2} \int_{\Bbb R^d}e^{ikx\cdot z-(k/2)|x-z|^2}e^{-|z|^2/2}dz.
$$
In other words $p(z)=z$ and $x(z)=z$.
In this case it is easy to compute $u_{GB}(x,0)$.
Since
\begin{align*}
|z|^2+k|x-z|^2 & = (1+k)|z|^2- 2kz\cdot x+ k|x|^2 \\
& =(1+k)|z- k(1+k)^{-1}x|^2+k(1+k)^{-1}|x|^2,
\end{align*}
we have
\begin{align*}
u_{GB}(x, 0) & =k^{d/2} e^{-(k/(2k+2))|x|^2}\int_{\Bbb R^d}e^{ikx\cdot z-(1/2)(1+k)|z- k(1+k)^{-1}x|^2}dz \\
&=k^{d/2}  (1+k)^{-d/2} \exp(-k/(2k+2))|x|^2+ik^2(1+k)^{-1}|x|^2) \int_{\Bbb R^d}e^{i\frac{k}{\sqrt{k+1}}x \cdot \xi}e^{-|\xi|^2/2}d\xi\\
& =k^{d/2} \left({2\pi\over k+1}\right)^{d/2}\exp(ik^2(1+k)^{-1}|x|^2-k/2|x|^2).
\end{align*}
We can see this implies
$$||u_{GB}(\cdot,0)||_{L^2}\sim k^{-d/4},$$
and passing to $\nabla u_{GB}(x,0)$ brings down factors of $x_j$ multiplied by factors of order $k$. Hence,
$$
||\nabla u_{GB}(\cdot,0)||_{L^2}\sim k^{1/2-d/4}.
$$
Note that here $|| u_{GB}(\cdot,0)||_{E}$ is not of order $k$.  However, like Example 2, the initial data here is not of form (\ref{id}).% and $K_0$ is not compact.
%this, due to some underlying cancellations,  is too small when $d>2$ compared to  $||\nabla u_{GB}(\cdot,0)||_{L^2}\sim k^{1-d/2}.$
%\red{
%Since our conjecture would say
%$$ ||\nabla u_{GB}(\cdot,0)||_{L^2}\sim k^{1-d/2},$$
%this is too small when $d>2$, due to some underlying cancellations.
%}

\iffalse
\red{
There are several possibilities:
\vskip.1in
\noindent
A) The conjecture may still be true, but the propagation error $\sim k^{1/2-(d+m)/4}$ is not correct in this case.
\vskip.1in
\noindent B) The choice of $p(z)$ here is not quite consistent with initial data for a beam, since $p(0)=0$. I do not think this is a problem since replacing $p(z)$ by $p(z)+p_0$ would put the point where $p=0$ far from the peak of the amplitude.
\vskip.1in
\noindent
C) There is surely a restriction on the choice of $(x(z),p(z))$ that will make the conjecture true. The question is whether it is simple and natural. }\\
\fi

%\noindent{\bf Example 6.  }
We may consider a more general case in the form
$$
u_{GB}(x,0)= k^{d/2} \int_{\Bbb R^d}e^{ikx\cdot z-(k/2)|x-z|^2}a(z)dz,
$$
where $a$ is assumed to be smooth with compact support.  Let $\eta =z - x$ and the integral becomes
$$
u_{GB}(x, 0)=k^{d/2} e^{ik|x|^2}\int_{\Bbb R^d}e^{ikx\cdot \eta -(k/2)|\eta|^2}a(\eta + x)d\eta.
$$
Using the Plancherel Theorem one can write
\begin{align*}
\int_{\Bbb R^d}e^{ikx\cdot \eta-(k/2)|\eta|^2}a(\eta+x)d\eta
& =C\int_{\Bbb R^d} k^{-d/2} e^{-|kx-\xi|^2(2k)^{-1}}\hat a(\xi)e^{ix\cdot \xi}d\xi.
\end{align*}
Now, assuming that $a(z)$ is smooth with compact support, $|\hat a(\xi)|\leq C_N(1+|\xi|^2)^{-N}$ for all $N$. So
$$|u_{GB}(x,0)|\leq A_N \int_{\Bbb R^d}e^{-|kx-\xi|^2(2k)^{-1}} (1+|\xi|^2)^{-N}d\xi.$$
Now divide that integral into $I_1=\int_{\{|\xi|<k|x|/2\}}$ and $I_2=\int_{\{|\xi|>k|x|/2\}}$. Then, taking $N$ large enough that $\int_{\Bbb R^d}(1+|\xi|^2)^{-N}d\xi<\infty$,   the contribution to $|u_{GB}(x,0)|$ from $I_1$  is bounded by
$$B_N  e^{-k|x|^2/8},$$
and the contribution from $I_2$ is bounded by
$$I_0(x)=B_N  \int_{\{|\xi|>k|x|/2\}}(1+|\xi|^2)^{-N}d\xi.$$
Finally we split $I_0$ into $\chi_{\{|x|>k^{-1/2}\}}(x)I_0(x)+\chi_{\{|x|<k^{-1/2}\}}(x)I_0(x)$ ($\chi_E$ is the characteristic function of $E$) .
Using that splitting and taking $N$ sufficiently large ($N=N(M)$), one ends up with for any $M>0$ and $\alpha>1$,
$$
|u_{GB}(x,0)|\leq [C_Mk^{-M}(1+|x|^\alpha)^{-d} + B_N\chi_{\{|x|<k^{-1/2}\}}(x) ]+B_Ne^{-k|x|^2/8}.
$$
 That leads once more to
$$||u_{GB}(\cdot,0)||_{L^2}\sim k^{-d/4}.$$

\section{Final remarks}
We have presented results on superpositions of Gaussian beams of order $N$ in dimension $d$ over arbitrary bounded sets of dimension $m$ in phase space, and shown that the error in the approximation of the exact solution with the same initial data is $O(k^{1-N/2 - (d-m)/4})$ in energy norm. This result is sharp for general super-positions. For exact solutions with WKB initial data, i.e. initial data of the form (\ref{id})  our numerical evidence  in the case $N = 1$ and $d = m$  indicates the stronger estimate $O(1)$, or $O(k^{-1})$ in the relative energy norm as conjectured in \cite{LRT13}. However, the numerical estimates in maximum norm are not uniform in time due to the presence of caustics; while away from caustics we know the relative propagation error in maximum norm  is $O(k^{-1})$ as has been proven in \cite{LRT16}. 

\iffalse
Gaussian beams are asymptotically valid high frequency solutions to wave equations concentrated on a single curve through the physical domain.  Superpositions of Gaussian beams provide a powerful tool to generate more general high frequency solutions.  In this work, we presented a superposition of Gaussian beams over an arbitrary bounded set of dimension $m$ in phase space, and show that  for first order beams the $L^2$ norm of $Pu_{GB}$ is $O(k^{1/2-(d-m)/4})$,  where $d$ is the spatial dimension.   But we do not have a lower bound on the propagation error that is strong enough to show that this bound on $Pu_{GB}$ implies the relative propagation error is $O(k^{-1/2})$ for superpositions with initial data of form (\ref{id}).  For first order beams,  we have an example where we know the propagation error is $O(k^{-1/2})$, but the initial data is not of form (\ref{id}).  %and also an example where the propagation error is $O(k^{-1})$.

Some possibilities for first order beams remain to be explored: \\

\noindent A)  Whether the relative propagation error for initial data of form (\ref{id}) can be improved so that the relative error is $\sim k^{-1 -(d-m)/4}$?
\vskip.1in
\noindent B) How to characterize initial data arising from general superpositions?
%For initial superposition with a faster asymptotic rate, whether the propagation ansatz needs to be reformulated?
%\vskip.1in
%\noindent
%C) There is surely a restriction on the choice of $(x(z), p(z))$ that will make the relative error $k^{-1/2}$ true. The question is whether it is simple and natural. \\
\fi

%We have a proof that   %Lower bounds on $||u_k(0)||_E$ are not easy to get.
 %In example 3 on page 12 it looks like you only have an upper bound. So maybe we should try to prove propagation error  $O(k^{-1/2})$ for a class of superpositions that is just large enough to include the example from \S 3?

\section*{Acknowledgments} 
This work was supported by the National Science Foundation under Grant RNMS (Ki-Net) 1107291 and by NSF Grant DMS1812666.


\begin{thebibliography}{10}

\bibitem{BAA09}
S. Bougacha, J.L. Akian, and R. Alexandre.
\newblock   Gaussian beams summation for the wave equation in a convex domain.
\newblock {\em Commun. Math. Sci.}, 7(4):973--1008, 2009.

\bibitem{BLQ14}
G. Bao, J.  Lai, and J.-L. Qian. 
\newblock Fast multiscale Gaussian beam methods for wave equations in bounded convex domains. 
\newblock  {\em J. Comput. Phys.}  261:36--64, 2014. 



\iffalse
\bibitem{CPP82}
 V. \v{C}erven\'{y}, M. M. Popov, and I. P\v{s}en\v{c}\'{i}k.
 \newblock  Computation of wave fields in inhomogeneous media Ñ Gaussian beam approach.
 \newblock {\em Geophys. J. R. Astr. Soc.}, 70:109--128, 1982.

 \bibitem{Hi90}
 N. R. Hill.
\newblock  Gaussian beam migration.
\newblock {\em Geophysics}, 55(11):1416--1428, 1990.


\bibitem{Kl86}
L. Klimes.
\newblock  Discretization error for the superposition of Gaussian beams.
 \newblock {\em Geophys. J. R. Astr. Soc.}, 86:531--551,1986.
\fi
 %\bibitem{MR10}
 %M. Motamed and O. Runborg.
 %\newblock  Taylor expansion and discretization errors in Gaussian beam
%superposition.
 % Wave Motion, 47, 421Ð439, 2010.

\bibitem{JJ15}
L. Jefferis and S.  Jin. 
\newblock A Gaussian beam method for high frequency solution of symmetric hyperbolic systems with polarized waves. 
\newblock  {\em Multiscale Model. Simul.} 13(3):733--765, 2015.

\bibitem{JW14}
S. Jin, D.-M.  Wei, and  D.-S. Yin. 
\newblock Gaussian beam methods for the Schr\"{o}dinger equation with discontinuous potentials. 
\newblock  {\em J. Comput. Appl. Math.}  265:199--219, 2014.

  \bibitem{LP15}
 H. Liu and M. Pryporov.
 \newblock  Error estimates of the Bloch band-based Gaussian beam superposition
for the Schr\"{o}dinger equation.
  \newblock {\em Contemp. Math.}, 640, 87--114, 2015.

  \bibitem{LP17}
 H. Liu and M. Pryporov.
 \newblock Error estimates for Gaussian beam methods applied to symmetric strictly hyperbolic systems.
\newblock {\em Wave Motion}, 73:57--75, 2017.

\bibitem{LR09}
 H. Liu and J. Ralston.
 \newblock Recovery of high frequency wave fields for the acoustic wave equation.
\newblock {\em Multiscale Model. Simul.}, 8: 428--444, 2009.

\bibitem{LR10}
H. Liu and J. Ralston.
\newblock Recovery of high frequency wave fields from phase spaceÐbased measurements
\newblock  {\em Multiscale Model. Simul.}, 8(2):622--644, 2010.


\bibitem{LRT13}
H. Liu, O. Runborg, and N. M. Tanushev.
\newblock Error estimates for Gaussian beam superpositions.
\newblock  Math. Comp., 82: 919--952, 2013.

\bibitem{LRRT14}
H. Liu, O. Runborg, J. Ralston and N. M. Tanushev.
\newblock Gaussian beam methods for the Helmholtz equation
\newblock  {\em SIAM J. Appl. Math.},  74(3): 771--793, 2014.

\bibitem{LRT16}
H. Liu, O. Runborg and N. Tanushev.
\newblock Sobolev and max norm error estimates for Gaussian beam superpositions.
 \newblock  {\em  Comm. Math. Sci.},  14(7): 2041--2076, 2016.

\iffalse
\bibitem{LY12}
J. Lu and X. Yang.
 \newblock  Convergence of frozen Gaussian approximation for high frequency wave
propagation.
 \newblock {\em  Comm. Pure Appl. Math.}, 65: 759--789, 2012.

\bibitem{RS09}
V. Rousse and T. Swart.
\newblock  A mathematical justification for the HermanÐKluk propagator.
 \newblock {\em Comm. Math. Phys.}, 286(2):725--750, 2009.

 \bibitem{Ta08}
N.M. Tanushev.
\newblock Superpositions and higher order Gaussian beams.
 \newblock {\em Commun. Math. Sci.}, 6(2):449--475, 2008.
\fi

\bibitem{Zh13}
C.-X. Zheng.
\newblock Global geometrical optics method.
\newblock  {\em Commun. Math. Sci.},  11(1): 105--140, 2013.

\bibitem{Zh14}
C.-X. Zheng.
\newblock Optimal error estimates for first--order Gaussian beam approximations to the Schr\"{o}dinger equation.
\newblock  {\em SIAM J. Numer. Anal.},  52(6): 2905--2930, 2014.

\end{thebibliography}
\end{document}